\newcommand{\rem}[1]{}
\newtheorem{proposition}{Proposition}
\newtheorem{definition}{Definition}
\newtheorem{theorem}{Theorem}
\newtheorem{lemma}{Lemma}
\newenvironment{proof}[1][Proof]{\begin{trivlist}
\item[\hskip \labelsep {\bfseries #1}]}{\end{trivlist}}
\newcommand{\qed}{\nobreak \ifvmode \relax \else
      \ifdim\lastskip<1.5em \hskip-\lastskip
      \hskip1.5em plus0em minus0.5em \fi \nobreak
      \vrule height0.75em width0.5em depth0.25em\fi}
\begin{document}
%
\title{Decomposition of Nonlinear Dynamical Networks via Comparison Systems}

 \author{\IEEEauthorblockN{Abdullah Al Maruf}
 \IEEEauthorblockA{Washington State University\\
 Pullman, WA 99164 USA\\ Email:\,abdullahal.maruf@wsu.edu}
 \and
 \IEEEauthorblockN{Soumya Kundu, Enoch Yeung}
 \IEEEauthorblockA{Pacific Northwest National Laboratory\\
 Richland, WA 99354 USA\\
 Email:\,\{soumya.kundu,enoch.yeung\}@pnnl.gov,}
 \and
 \IEEEauthorblockN{Marian Anghel}
 \IEEEauthorblockA{Los Alamos National Laboratory\\
 Los Alamos, NM 87545 USA\\
 Email:\,manghel@lanl.gov}
 }


%


\maketitle

\begin{abstract}
In analysis and control of large-scale nonlinear dynamical systems, a distributed approach is often an attractive option due to its computational tractability and usually low communication requirements. Success of the distributed control design relies on the separability of the network into weakly interacting subsystems such that minimal information exchange between subsystems is sufficient to achieve satisfactory control performance. While distributed analysis and control design for dynamical network have been well studied, decomposition of nonlinear networks into weakly interacting subsystems has not received as much attention. In this article we propose a vector Lyapunov functions based approach to quantify the energy-flow in a dynamical network via a model of a comparison system. Introducing a notion of power and energy flow in a dynamical network, we use sum-of-squares programming tools to partition polynomial networks into weakly interacting subsystems. Examples are provided to illustrate the proposed method of decomposition.
\end{abstract}


%
\IEEEpeerreviewmaketitle

\section{Introduction}

Control of large-scale dynamical systems is challenging because of the computational complexity. In such scenarios, distributed control design is an attractive option which often offers a trade off between computational tractability and controller performance \cite{Bullo:2009,Siljak:2010}. Distributed control design has become a critical challenge with the advent of interdependent large-scale infrastructure systems, smart cities, cloud architectures, and coordination and control problems for large fleets of independent agents. The combination of scale and dynamical complexity of these systems makes it computationally difficult to design global control solutions. Design of distributed control architectures can be roughly described as a three step process: 1) decomposition of the network into subsystems, 2) distributed control design to stabilize isolated subsystems, and 3) verification of stability of the closed-loop network under distributed control. The first step of this process is critical, and is the focus of this paper, since a poor system or model decomposition can affect all subsequent steps of distributed control design. 

In some applications, the physical layout or construction of a network dictates the subsystem structure. For example, in critical infrastructure systems the subsystem structure is traditionally defined based on distance and connectivity of buses, nodes or junctions. The increasing integration and instrumentation of heterogeneous infrastructure systems, e.g. power-gas systems \cite{chiang2016large,li2008interdependency} or water-power systems \cite{pereira2016joint}, introduces spatial and temporal overlap in time-scales across different systems, removing the insularity required previously to guarantee control performance. Moreover, there are scenarios where a suitable system decomposition may not be known {\it a priori}, e.g. design of large-scale or ad-hoc communication networks \cite{chiang2007layering, chiang2016large} or cyber-physical systems made of agile teams of agents. These reasons motivate the development of new system decomposition algorithms that identify appropriate subsystems that facilitate and enhance distributed control design and performance. 

While the idea of using a decomposed network model for distributed stability analysis and control design has been proposed in the literature \cite{Bullo:2009,Siljak:2010,Kundu:2015NecSys,Kundu:2017Multiple}\,, decomposition of nonlinear dynamical networks for distributed analysis and control has not received as much attention. In \cite{Anderson:2011,Anderson:2012}, authors proposed an algorithm to decompose nonlinear networks into sub-networks to perform stability analysis using composite Lyapunov functions. However, the proposed method involved linearization of the dynamics, and hence 1) is not suitable for networks that have nonlinear interactions, and 2) provides a decomposition that is only appropriate in the close neighborhood of the operating point. In a recently concluded work \cite{Liu:2017}, authors used data-driven deep learning techniques to decompose a nonlinear dynamical network using Koopman grammians. 

In this paper we propose a model-based decomposition method for nonlinear dynamical networks using comparison systems representation. Vector Lyapunov functions and comparison systems have been used for distributed stability analysis and control design for nonlinear networks \cite{Bailey:1966, Araki:1978, Michel:1983, Siljak:1972, Weissenberger:1973}. In \cite{Kundu:2015CDC,Kundu:2015NecSys,Kundu:2017Multiple} authors proposed sum-of-squares programming methods to compute the comparison systems for generic polynomial dynamical networks. In this paper, we use vector Lyapunov functions and comparison systems based approach to quantify the energy-flow in the network and perform a spectral clustering of the network such that the energy-flow between the sub-networks is minimized. The proposed method is dynamic since it allows to find a decomposition that best suits a given set of operating conditions. The paper is structured as follows. In Section\,\ref{S:problem}, we describe the problem. Section\,\ref{S:prelim} presents a brief overview of the necessary background. The proposed algorithms is discussed in details in Section\,\ref{S:method}, while numerical examples are presented in Section\,\ref{S:results}. The paper is concluded in Section\,\ref{S:concl}.

\section{Problem Description}\label{S:problem}
Consider an $n$-dimensional nonlinear dynamical system of the form 
\begin{align}\label{E:f}
\dot{x}(t) &= f(x(t))~ \forall t \geq 0\,,~x\in\mathbb{R}^n\,,
\end{align}
with an equilibrium at the origin $(f(0)= 0)$, where $f:\mathbb{R}^n \rightarrow \mathbb{R}^n$ is locally Lipschitz. For brevity, we would drop the argument $t$ from the state variables, whenever obvious. Often complex real-life systems can be modeled in the general form \eqref{E:f}. Analysis of such systems for control and decision making, however, can be non-trivial due to the nonlinearities. Especially, computational techniques for nonlinear systems analysis generally scale poorly with the size of the system. Furthermore, in real-time operations and decision making, often the information required for a global system-wide analysis is unavailable, rendering a decomposition-based distributed analysis the preferred option. The system in \eqref{E:f} can be viewed as a nonlinear network of $m$ dynamical subsystems as follows:
\begin{subequations}\label{E:fi}
\begin{align}
\mathcal{S}_i\,:\quad& \dot{x}_i=f_i(x_i)+\!\!\sum_{j\in\mathcal{N}_i\backslash{i}}\!\!g_{ij}(x_i,x_j)\,,\\
\text{where,}\,~&~x_i\in\mathbb{R}^{n_i},\,x_j\in\mathbb{R}^{n_j}~\,\forall j\in\mathcal{N}_i\backslash{i}\,,\\
&f_i(0)=0\text{ and }g_{ij}(x_i,0)=0~\,\forall x_i\,~\forall j\in\mathcal{N}_i\backslash{i}\,.
\end{align}\end{subequations}
Each subsystem $\mathcal{S}_i$ has some local dynamical state variables $x_i$ associated with it, while $\mathcal{N}_i$ (with $i\in\mathcal{N}_i$) represent the neighboring nodes that interact with the subsystem-$i$. $f_i:\mathbb{R}^{n_i}\rightarrow\mathbb{R}^{n_i}$ represents the isolated subsystem dynamics, while the terms $g_{ij}:\mathbb{R}^{n_i}\!\times\!\mathbb{R}^{n_j}\!\rightarrow\!\mathbb{R}^{n_i}$ represent the interactions from the neighbors. Note that the interaction model (total interaction as a sum of the $g_{ij}$'s) used here is chosen for simplicity, but more generic interaction models can also be considered. Also, it is assumed that the decomposition is overlapping in a way that no two subsystems share any common state variable, and 
\begin{align*}
{\sum}_{i=1}^mn_i=n\,. 
\end{align*}

A decomposed system model of the form \eqref{E:fi} has been used for stability analysis studies in the literature (see \cite{Araki:1978,Michel:1983,Siljak:1972,Weissenberger:1973,Kundu:2017Multiple,Kundu:2015ACC}). However, finding a suitable decomposition of the large system that facilitates distributed analysis is not trivial. In \cite{Anderson:2012,Anderson:2011} authors applied spectral graph partitioning algorithms on linearized system dynamics to obtain a system decomposition that aims to minimize worst-case energy flows between subsystems. However, because of the linearization, such methods are only applicable in the close neighborhood of the operating points. For systems under large disturbance, i.e. far away from the operating point, the decomposition structure is expected to change. Furthermore, quadratic and higher order terms in the interactions do not show up in the linearized representation and are therefore completely ignored in the decomposition.

In this article, we propose a decomposition algorithm to partition a nonlinear dynamical network into weakly-interacting subsystems, whereby the nonlinearity of the system is explicitly considered in the decomposition algorithm, via the use of a \textit{comparison systems} representation, as explained in the following sections. 

\section{Preliminaries}\label{S:prelim}

Let us use $\left|\,\cdot\,\right|$ to denote both the Euclidean norm of a vector and the absolute value of a scalar; and use $\mathbb{R}\left[x\right]$ to denote the ring of all polynomials in $x\in\mathbb{R}^n$. 

\subsection{Stability Analysis}

The equilibrium point at the origin of \eqref{E:f} is Lyapunov stable if, for every $\varepsilon\!>\!0$ there is a $\delta\!>\!0$ such that $\left\vert x(t)\right\vert\!<\!\varepsilon~\forall t\!\geq\! 0$ whenever $\left\vert x(0)\right\vert\!<\!\delta\,.$ Moreover, it is asymptotically stable in a domain $\mathcal{D}\!\subseteq\!\mathbb{R}^n,\,0\!\in\!\mathcal{D},$ if it is Lyapunov stable and  $\lim_{t\rightarrow\infty}\left\vert x(t)\right\vert \!=\!0\,$ for every $x(0)\!\in\!\mathcal{D}$\,.

\begin{theorem}\label{T:Lyap}
(Lyapunov, \cite{Lyapunov:1892}, \cite{Khalil:1996}) If there exists a domain $\mathcal{D}\!\!\subseteq\!\!\mathbb{R}^n$, $0\!\in\!\!\mathcal{D}$, and a continuously differentiable positive definite function {$\tilde{V}\!\!:\!\mathcal{D}\!\rightarrow\! \mathbb{R}_{\geq 0}$}, i.e. the `Lyapunov function' (LF), then the origin of \eqref{E:f} is asymptotically stable if $\nabla{\tilde{V}}^T\!\!f(x)$ is negative definite in $\mathcal{D}$. 
\end{theorem}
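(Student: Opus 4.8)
The plan is to establish the two ingredients of asymptotic stability in turn --- first Lyapunov stability, then attractivity of the origin --- and in both to exploit that $\dot{\tilde V}(x) := \nabla\tilde{V}(x)^T f(x)$ being negative (definite) on $\mathcal{D}$ makes $t \mapsto \tilde V(x(t))$ nonincreasing along any solution that stays in $\mathcal{D}$.

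For Lyapunov stability, fix $\varepsilon > 0$. First I would shrink $\varepsilon$ if necessary so that the closed ball $\overline{B}_\varepsilon = \{x : |x| \le \varepsilon\}$ lies inside $\mathcal{D}$, which is possible since $0$ is interior to $\mathcal{D}$. On the sphere $\{|x| = \varepsilon\}$, which is compact, the continuous positive definite function $\tilde V$ attains a minimum $\alpha > 0$. Choosing any $\beta \in (0,\alpha)$, consider the sublevel set $\Omega_\beta := \{x \in \overline{B}_\varepsilon : \tilde V(x) \le \beta\}$; since $\tilde V \ge \alpha > \beta$ on the boundary sphere, $\Omega_\beta$ is in fact contained in the open ball $B_\varepsilon$, so it is a compact neighborhood of the origin lying strictly inside $\mathcal{D}$. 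Because $\dot{\tilde V} \le 0$ throughout $\mathcal{D}$, $\tilde V$ cannot increase along solutions, so $\Omega_\beta$ is positively invariant and every solution issued from $\Omega_\beta$ remains in $\overline{B}_\varepsilon \subseteq \mathcal{D}$ for all $t \ge 0$ (this boundedness also rules out finite escape time, so the solution is forward complete and never leaves the region where the hypotheses apply). Finally, by continuity of $\tilde V$ with $\tilde V(0) = 0$, pick $\delta > 0$ so that $\tilde V(x) < \beta$ whenever $|x| < \delta$; then $|x(0)| < \delta$ forces $x(0) \in \Omega_\beta$, hence $|x(t)| \le \varepsilon$ for all $t \ge 0$, which is Lyapunov stability.

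For attractivity, take $x(0) \in \Omega_\beta$. Since $t \mapsto \tilde V(x(t))$ is nonincreasing and bounded below by $0$, it converges to some $c \ge 0$, and I claim $c = 0$. If $c > 0$, continuity of $\tilde V$ gives $d > 0$ with $\tilde V(x) < c$ for $|x| \le d$, so the solution satisfies $|x(t)| \ge d$ for all $t \ge 0$; on the compact annulus $\{d \le |x| \le \varepsilon\}$ the continuous function $\dot{\tilde V}$ is negative definite, hence attains a maximum $-\gamma$ with $\gamma > 0$, whence $\tilde V(x(t)) = \tilde V(x(0)) + \int_0^t \dot{\tilde V}(x(s))\,ds \le \tilde V(x(0)) - \gamma t \to -\infty$, contradicting $\tilde V \ge 0$. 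Therefore $\tilde V(x(t)) \to 0$, and positive definiteness of $\tilde V$ then forces $|x(t)| \to 0$: otherwise some sequence $t_k \to \infty$ would obey $|x(t_k)| \ge \eta > 0$ and thus $\tilde V(x(t_k)) \ge \min_{\eta \le |x| \le \varepsilon} \tilde V > 0$, a contradiction.

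The steps demanding the most care are the construction of the compact, positively invariant sublevel set $\Omega_\beta$ and the verification that solutions started in it remain in $\mathcal{D}$ for all forward time; without that one is not entitled to use the sign condition on $\dot{\tilde V}$, which is only assumed on $\mathcal{D}$. The remaining ingredients are routine: continuous functions attain their extrema on compact sets, a bounded monotone function converges, and $\dot x = f(x)$ with locally Lipschitz $f$ has no finite escape time while the trajectory stays in a compact set.
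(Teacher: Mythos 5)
The paper does not prove this statement; it is quoted as a classical result with citations to Lyapunov and to Khalil, whose Theorem 4.1 proof is exactly the argument you give. Your proof is correct and complete --- the construction of the compact positively invariant sublevel set $\Omega_\beta$ inside $\mathcal{D}$, the monotone-convergence argument showing $\tilde V(x(t))\to c$, and the annulus/integration contradiction ruling out $c>0$ are all handled properly --- so there is nothing to reconcile with the paper beyond noting that you have supplied the standard proof it defers to by citation.
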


An estimate of the region-of-attraction (ROA) can be given by \cite{Genesio:1985}
\begin{align}\label{E:ROA} 
&~~\mathcal{R}:=\left\lbrace x\in\mathcal{D}\left| {V}(x)\leq 1\right.\right\rbrace\,,~\text{with}~{V}(x)= \frac{\tilde{V}(x)}{\gamma^{max}},\\
&\text{where}~\gamma^{max}:=\max\left\lbrace \gamma\, \left\vert\, \left\lbrace x\in\mathbb{R}^n\left| \tilde{V}(x)\leq\gamma\right.\right\rbrace \subseteq \mathcal{D}\right.\right\rbrace\,,\notag
\end{align}
i.e. the boundary of the ROA is estimated by the unit level-set of a suitably scaled LF ${V}(x)$. 

Relatively recent studies have explored how sum-of-squares (SOS) based methods can be utilized to find LFs by restricting the search space to SOS polynomials \cite{Wloszek:2003,Parrilo:2000,Tan:2006,Anghel:2013}. A (multivariate) polynomial $p\in\mathbb{R}\left[x\right],~x\in\mathbb{R}^n$, is called a \textit{sum-of-squares} (SOS) if there exist some polynomial functions $h_i(x), i = 1\ldots s$ such that 
$p(x) = \sum_{i=1}^s h_i^2(x)$.
We denote the ring of all SOS polynomials in $x\in\mathbb{R}^n$ by ${\Sigma}[x]$. 
Checking if $p\!\in\!\mathbb{R}[x]$ is an SOS is a semi-definite problem which can be solved with a MATLAB$^\text{\textregistered}$ toolbox SOSTOOLS \cite{sostools13,Antonis:2005a} along with a semidefinite programming solver such as SeDuMi \cite{Sturm:1999}. The SOS technique can be used to search for polynomial LFs by translating the conditions in Theorem\,\ref{T:Lyap} to equivalent SOS conditions \cite{sostools13,Wloszek:2003,Wloszek:2005,Antonis:2005,Antonis:2005a, Chesi:2010a }. An important result from algebraic geometry, called Putinar's Positivstellensatz (P-Satz) theorem \cite{Putinar:1993,Lasserre:2009}, helps in translating the SOS conditions into SOS feasibility problems.
\begin{theorem}\label{T:Putinar}
Let $\mathcal{K}\!\!=\! \left\lbrace x\in\mathbb{R}^n\left\vert\, k_1(x) \geq 0\,, \dots , k_m(x)\geq 0\!\right.\right\rbrace$ be a compact set, where $k_j\!\in\!\mathbb{R}[x]$, $\forall j\in\left\lbrace 1,\dots,m\right\rbrace$. Suppose there exists a $\mu\!\in\! \left\lbrace \sigma_0 + {\sum}_{j=1}^m\sigma_j\,k_j \left\vert\, \sigma_0,\sigma_j \in \Sigma[x]\,,\forall j \right. \right\rbrace$ such that $\left\lbrace \left. x\in\mathbb{R}^n \right\vert\, \mu(x)\geq 0 \right\rbrace$ is compact. Then,  
\begin{align*}
p(x)\!>\!0~\forall x\!\in\!\mathcal{K}
\!\implies\! p \!\in\! \left\lbrace \sigma_0 \!+\! {\sum}_j\sigma_jk_j\!\left\vert\, \sigma_0,\sigma_j\!\in\!\Sigma[x],\forall j\!\right.\right\rbrace\!.
\end{align*}
\end{theorem}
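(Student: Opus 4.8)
The statement is Putinar's Positivstellensatz, a standard result of real algebraic geometry; I sketch the classical functional-analytic proof. Throughout, write $M:=\{\sigma_0+{\sum}_{j=1}^m\sigma_jk_j \mid \sigma_0,\sigma_j\in\Sigma[x]\}$ for the quadratic module appearing in the statement, and recall that $M$ is closed under addition and under multiplication by elements of $\Sigma[x]$, and that $\mathbb{R}_{\geq 0}\subseteq\Sigma[x]\subseteq M$. The first step is a reduction: from the hypothesis that some $\mu\in M$ has compact nonnegativity set $\{\mu\geq 0\}$, one shows that $M$ is \emph{Archimedean}, i.e. there exists $N>0$ with $N-|x|^2\in M$. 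Equivalently, for every $q\in\mathbb{R}[x]$ there is a $\lambda>0$ with $\lambda-q\in M$ and $\lambda+q\in M$; in particular $M-M=\mathbb{R}[x]$ and, writing $1+tq=(1-t\lambda)\cdot 1+t(\lambda+q)$, one sees that $1$ lies in the algebraic interior (the \emph{core}) of the convex cone $M$. This reduction is the technical heart of the set-up; I would follow Putinar's argument for it and take it as given in what follows.

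Suppose now, toward a contradiction, that $p>0$ on $\mathcal{K}$ but $p\notin M$. Since $M$ is convex with $1$ in its core and $p\notin M$, the Eidelheit separation theorem (a Hahn--Banach-type result requiring only algebraic interiority, no topology) provides a nonzero linear functional $L:\mathbb{R}[x]\to\mathbb{R}$ with $L(q)\geq 0$ for all $q\in M$ and $L(p)\leq 0$. Because $1$ is an order unit for the preorder induced by $M$, $L$ cannot vanish at $1$ (otherwise $0=\lambda L(1)\geq L(q)\geq -\lambda L(1)=0$ for every $q$, forcing $L\equiv 0$); so $L(1)>0$, and we normalize $L(1)=1$. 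Note in particular $L(\sigma)\geq 0$ for every $\sigma\in\Sigma[x]$ and $L(\sigma k_j)\geq 0$ for every $j$ and every $\sigma\in\Sigma[x]$.

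Next I would promote $L$ to a measure. The Archimedean bound $N-|x|^2\in M$, combined with $(N-|x|^2)\sigma\in M$ for $\sigma\in\Sigma[x]$ and the Cauchy--Schwarz inequality for the positive semidefinite form $(f,g)\mapsto L(fg)$, yields growth estimates $|L(x^\alpha)|\leq N^{|\alpha|/2}$. By a standard multivariate moment-problem argument (Haviland's theorem together with these bounds, equivalently Putinar's moment criterion), there is a positive Borel measure $\nu$, supported on the ball $K_0:=\{\,|x|\leq\sqrt N\,\}$, with $L(q)=\int q\,d\nu$ for all $q\in\mathbb{R}[x]$; in particular $\nu(K_0)=L(1)=1$. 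For each $j$, $\int\sigma k_j\,d\nu=L(\sigma k_j)\geq 0$ for every $\sigma\in\Sigma[x]$; choosing $\sigma$ to be squares of polynomials that approximate the indicator of $\{k_j<0\}$ (Stone--Weierstrass on $K_0$) forces $\nu(\{k_j<0\})=0$, so $\nu$ is supported on $\mathcal{K}=\bigcap_j\{k_j\geq 0\}$. Hence $L(p)=\int_{\mathcal{K}}p\,d\nu>0$, because $p$ is strictly positive on $\mathcal{K}$ and $\nu$ is a nonzero positive measure supported there. This contradicts $L(p)\leq 0$, so $p\in M$.

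I expect the main obstacle to lie in two intertwined places. First, the reduction establishing that $M$ is Archimedean genuinely uses the algebraic structure of the quadratic module and not merely convexity, and it is easy to produce a polynomial that is nonnegative off a compact set without it (or any scalar shift of it) lying in $M$; getting this step right is the delicate point. Second, the separation step must be handled with care: $M$ need not be closed in any natural topology, so one cannot separate $p$ from a closed cone directly, and it is precisely the Archimedean/order-unit structure that both licenses the Eidelheit separation with $L(1)=1$ and is what subsequently turns the abstract separating functional into a genuine Borel measure supported on $\mathcal{K}$.
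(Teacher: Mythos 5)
The paper does not prove this theorem at all: it is quoted verbatim as a background result (Putinar's Positivstellensatz) with citations to \cite{Putinar:1993,Lasserre:2009}, so there is no in-paper argument to compare yours against. Judged on its own terms, your sketch is a faithful outline of the classical functional-analytic proof: Archimedean reduction, Eidelheit separation of $p$ from the quadratic module $M$ using that $1$ lies in the algebraic interior, normalization $L(1)=1$, promotion of $L$ to a representing measure via the moment bounds coming from $N-|x|^2\in M$ (e.g. using the identity $\lambda^2-q^2=\tfrac{1}{2\lambda}\bigl((\lambda+q)^2(\lambda-q)+(\lambda-q)^2(\lambda+q)\bigr)$ to propagate bounds, since $M$ is closed under multiplication by $\Sigma[x]$), localization of the support to $\mathcal{K}$, and the final contradiction $L(p)=\int_{\mathcal{K}}p\,d\nu>0$. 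The one step you explicitly defer --- deducing that $M$ is Archimedean from the hypothesis that some $\mu\in M$ has compact nonnegativity set --- is the genuinely nontrivial content of Putinar's theorem (the remainder being a standard moment-problem argument), and you correctly flag it as such rather than hand-waving it; a fully self-contained proof would have to supply that argument. Within the scope of this paper, which uses the theorem only as a tool to turn positivity constraints on $\mathcal{D}$ into SOS feasibility conditions, citing the result as the authors do is the appropriate level of detail.
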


\subsection{Linear Comparison Principle}
{In} \cite{Conti:1956,Brauer:1961} the authors proposed to view the LF as a dependent variable in a first-order auxiliary differential equation, often termed as the `comparison equation' (or, `comparison system'). 
{It was shown in \cite{Bellman:1962,Bailey:1966} that, under certain conditions,} the comparison equation can be effectively reduced to a set of linear differential equations. 
Noting that all the elements of the { matrix} $e^{At},~ t\geq 0$, where $A=\left[a_{ij}\right]\in\mathbb{R}^{m\times m}$, are non-negative if and only if $a_{ij}\geq 0, i\neq j$, {it was shown in \cite{Beckenbach:1961,Bellman:1962}:
\begin{lemma}\label{L:comparison}
Let $A\!\in\!\mathbb{R}^{m\times m}$ have non-negative off-diagonal elements, $v:[0,\infty)\!\rightarrow\!\mathbb{R}^m$ and $r:[0,\infty)\!\rightarrow\!\mathbb{R}^m$. If $v(0)\!=\!r(0)\,$, $\dot{v}(t)\!\leq\!A\,v(t)$ and $\dot{r}(t)\!=\!A\,r(t)\,,$ then $v(t)\!\leq\! r(t)~\forall t\!\geq\! 0\,$.
\end{lemma}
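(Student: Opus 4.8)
The plan is to turn the vector differential \emph{inequality} into an exact identity for the gap process $r-v$ and then read its sign off from the variation-of-constants formula; the only external ingredient is the entrywise nonnegativity of $e^{At}$, $t\ge0$, quoted immediately before the statement.

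First I would set $w(t):=r(t)-v(t)$, so $w(0)=0$ and, since $\dot r=Ar$ and $\dot v\le Av$,
\begin{align*}
\dot w(t)=\dot r(t)-\dot v(t)\geq A r(t)-A v(t)=A w(t)\,.
\end{align*}
Writing $u(t):=\dot w(t)-Aw(t)$, this says precisely that $u(t)\ge0$ componentwise for all $t\ge0$ and that $w$ solves the linear system $\dot w=Aw+u$ with $w(0)=0$. I would also record the Metzler property in the form needed: since the off-diagonal entries of $A$ are nonnegative, any $c\ge0$ large enough makes $B:=A+cI$ entrywise nonnegative, hence $e^{Bt}=\sum_{k\ge0}(Bt)^k/k!$ is entrywise nonnegative for $t\ge0$, and therefore so is $e^{At}=e^{-ct}e^{Bt}$.

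Next I would apply the integrating factor $e^{-At}$, which gives $\tfrac{d}{dt}\big(e^{-At}w(t)\big)=e^{-At}u(t)$; integrating over $[0,t]$ and using $w(0)=0$ yields
\begin{align*}
w(t)=\int_0^t e^{A(t-s)}\,u(s)\,ds\,.
\end{align*}
For $0\le s\le t$ the matrix $e^{A(t-s)}$ is entrywise nonnegative and the vector $u(s)$ is componentwise nonnegative, so the integrand is a componentwise-nonnegative vector; hence $w(t)\ge0$, i.e. $v(t)\le r(t)$ for all $t\ge0$.

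The step I would scrutinize is not algebraic but a regularity matter: the computation tacitly uses that $w$ is absolutely continuous (so the fundamental theorem of calculus applies and the defect $u$ is locally integrable), which is automatic under the smoothness the paper assumes for Lyapunov functions and trajectories. Should one wish to relax $\dot v\le Av$ to a one-sided (Dini) derivative inequality, the clean substitute is a barrier argument: for $\varepsilon>0$ let $w_\varepsilon$ solve $\dot w_\varepsilon=Aw_\varepsilon+u+\varepsilon\mathbf{1}$ with $w_\varepsilon(0)=\varepsilon\mathbf{1}$ (here $\mathbf{1}$ denotes the all-ones vector); a hypothetical first instant at which some component of $w_\varepsilon$ returned to $0$ would force that component to have a strictly positive derivative there --- because the off-diagonal entries of $A$ are nonnegative and the remaining components are still $\ge0$ --- a contradiction, so $w_\varepsilon>0$ on $[0,\infty)$, and letting $\varepsilon\downarrow0$ recovers $w\ge0$ by continuous dependence on the data.
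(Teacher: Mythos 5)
Your proof is correct and uses precisely the mechanism the paper points to: the paper does not prove the lemma itself (it cites Beckenbach--Bellman), but the sentence immediately preceding it singles out the entrywise nonnegativity of $e^{At}$ for Metzler $A$ as the key fact, and your variation-of-constants argument $w(t)=\int_0^t e^{A(t-s)}u(s)\,ds\ge 0$ with $u:=\dot w-Aw\ge 0$ is the standard way to turn that fact into the stated conclusion. The regularity caveat and the barrier-argument alternative you mention are sensible but not needed at the level of generality the paper assumes.
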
}
We henceforth refer to Lemma\,\ref{L:comparison} as the `comparison principle'; the {linear time-invariant system $\dot{r}(t)\!=\!A\,r(t)$}
as a `comparison system' (CS); and the matrix $A$ as the comparison matrix (CM). 

\subsection{Similarity Graphs and Spectral Partitioning}

Consider a graph $\mathcal{G}(\mathcal{V},\mathcal{E})$ where $\mathcal{V}= \{1, 2,\dots, m\}$ represent a set of $m$ vertices (or, nodes) and $\mathcal{E}\subseteq\mathcal{V}\times\mathcal{V}$ denote a set of $l$ edges between the vertices (i.e. $\mathbf{card}({\mathcal{E}})=l$). A notion of \textit{similarity} (or, edge weight), denoted by a scalar $w_{ij}\geq 0$\,, is associated with each pair of nodes $(i,j)$ in the graph, such that $w_{ij}>0$ if an edge exists between the nodes $i$ and $j$, i.e. $(i,j)\in\mathcal{E}$, and $w_{ij}=0$ if there does not exist an edge between the nodes $i$ and $j$ (i.e. $(i,j)\notin\mathcal{E}$). The edge weights quantify how similar two nodes in the graph are to each other, i.e. higher the value of the edge weights more similar the corresponding vertices are. 

The spectral clustering technique used in this paper for graph partitioning is applicable for graphs that are \textit{undirected}, i.e. the edge weights (or similarity values) satisfy $w_{ij}=w_{ji}\,\forall i\neq j$ (clearly, for undirected graphs, $(i,j)\in\mathcal{E}$ implies that $(j,i)\in\mathcal{E}$)\,. If a graph is \textit{directed}, we can convert it into an \textit{undirected} graph by assigning
\begin{align*}
w_{ij}\gets\frac{1}{2}\left(w_{ij}+w_{ji}\right)\,,\text{ or }w_{ij}\gets\max\lbrace w_{ij},w_{ji}\rbrace\,.
\end{align*}
A symmetric \textit{weighted adjacency matrix} $W(\mathcal{G})=[w_{ij}]\in\mathbb{R}^{m\times m}$ is then constructed, such that $w_{ii}=0~\forall i\in\mathcal{V}$\,, and $w_{ij}=w_{ji}>0~\forall (i,j)\in\mathcal{E}\,.$ Finally, a normalized (symmetric positive semi-definite) \textit{graph Laplacian matrix}, $L_{sym}(\mathcal{G})\in\mathbb{R}^{m\times m}$, is defined as,
\begin{align*}
L_{sym} = I_{m}-D^{-1/2}WD^{-1/2},\,~\,D=\mathbf{diag}\left(W\,\mathbf{1}_m\right)\,,
\end{align*}
where $I_m$ is an $m\times m$ identity matrix, $\mathbf{1}_m$ is a $m$-dimensional column vector with each entry equal to $1$\,, $D$ is a diagonal matrix with diagonal entries equal to the sum of the corresponding row in $W$\,; and $\left(\cdot\right)^{-1/2}$ denotes the inverse of the square root a matrix.

The idea behind spectral graph partitioning is to partition the graph $\mathcal{G}$ into sub-graphs in such a way that the vertices within a sub-graph are \textit{similar} to each other (i.e. have high edge weights), while vertices from two different sub-graphs are \textit{dissimilar} (i.e. have low edge weights). In particular, the $K$-means spectral clustering algorithm uses the normalized graph Laplacian $L_{sym}$ to partition the vertices into $K$ clusters. The first $K$ eigenvectors of the graph Laplacian, in the order of increasing eigenvalues, are used as columns to construct a matrix $U\in\mathbb{R}^{m\times K}$. Then the normalized rows of the matrix $U$ are clustered using the $K$-means clustering algorithm. For further details on the algorithm, please refer to \cite{Von:2007,Ng:2002}.

\section{Decomposition via Comparison Systems}\label{S:method}

In this section we describe our proposed approach of decomposing a nonlinear (polynomial) dynamical network using a comparison system. The proposed approach is composed of two steps - 1) computing a comparison system, and 2) apply spectral graph partitioning on the comparison system.

\subsection{Constructing Comparison System} 
Given a polynomial network of the form \eqref{E:fi}\,, vector Lyapunov functions of the subsystems can be used to construct a linear comparison system. In \cite{Kundu:2015CDC} authors used sum-of-squares programming to compute the vector Lyapunov functions and the comparison matrix for polynomial dynamical networks. It is assumed that, in absence of any interaction, each subsystem $\mathcal{S}_i$ is locally asymptotically stable at the origin. Specifically, we assume that for each $i\in\lbrace 1,\dots,m\rbrace$
\begin{align*}
\mathcal{S}_i\,\text{ (isolated) }\,:~\dot{x}_i=f_i(x_i)
\end{align*} 
has an asymptotically stable equilibrium point at the origin, and admits a polynomial Lyapunov function $V_i(x_i)$\,. The expanding interior algorithm \cite{Wloszek:2003,Anghel:2013} can be used to compute these LFs and a corresponding region of attraction such that $\mathcal{R}_i:= \left\lbrace x_i\in\mathbb{R}^{n_i}\left| V_i(x_i)\leq 1\right.\right\rbrace$ is an estimate of the region of attraction of the $i$-th isolated subsystem.

The vector LFs thus computed are used to define the states of the \textit{comparison system}, 
\begin{align*}
v(t)&:=\begin{bmatrix}
V_1(x_1(t)) & V_2(x_2(t)) & \dots & V_m(x_m(t))
\end{bmatrix}^T
\end{align*}
such that the comparison system is a \textit{positive system}, i.e. each state of the comparison system only takes non-negative values at all time $t$\,. Then the objective is to compute the comparison matrix (CM) $A=[a_{ij}]$ such that in some domain 
\begin{align*}
\mathcal{D}&:= \left\lbrace x\in\mathbb{R}^n\left| \,V_i(x_i)\leq \gamma_i~\forall i\right.\right\rbrace\subseteq \left\lbrace x\in\mathbb{R}^n\left| \,x_i\in\mathcal{R}_i~\forall i\right.\right\rbrace
\end{align*}
where $\gamma_i\leq 1$\,, the following comparison equations hold:
\begin{align*}
\dot{v}\leq A\,v\,~\forall x\in\mathcal{D}\,,\text{ where }A=[a_{ij}]\,,\,a_{ij}\geq 0\,\forall i\neq j\,.
\end{align*}

Sum-of-squares programming can be used to compute the CM which has non-negative off-diagonal elements. In order to obtain a stable comparison system, the CM $A$ has to be Hurwitz. While solving for a generic Hurwitz CM for a large system is difficult, a sufficient condition for CM to be Hurwitz is given by the Gershgorin circle theorem \cite{Bell:1965} which says that a strictly diagonally dominant matrix\footnote{$A=[a_{ij}]$ is strictly diagonally dominant if $\sum_{j\neq i}\left|a_{ij}\right|<\left|a_{ii}\right|,\forall i$.} with negative diagonal elements is Hurwitz. This condition motivates us to solve for each row of the CM, in a parallel and scalable way, by solving the following problem:
\begin{align}
\forall i: ~\underset{a_{ij}}{\min}~{\sum}_{j\in\mathcal{N}_i} a_{ij},\text{ s.t. }\,\dot{V}_i\leq {\sum}_{j\in\mathcal{N}_i} a_{ij}V_j~\forall x\in\mathcal{D},
\end{align}
which can be formulated as an SOS optimization:
\begin{align}
\forall i\!:& ~\underset{a_{ij},\sigma_{ij}}{\text{minimize}}~{\sum}_{j\in\mathcal{N}_i} a_{ij}\\
\text{s.t.}&~ a_{ii}\in\mathbb{R}\,,~a_{ij}\!\geq 0~\forall i\neq j\,,~\sigma_{ij}\in\Sigma[x_{\mathcal{N}_i}]\,,\notag\\
&\!-\!\nabla V_i^T\!(f_i\!+\!\sum_{j\neq i} g_{ij}\!)\!+\!\!\sum_{j\in\mathcal{N}_i}\!\!\left(a_{ij}V_j\!-\!\sigma_{ij}(\gamma_j\!-\!V_j)\right)\!\in\!\Sigma[x_{\mathcal{N}_i}].\notag
\end{align}
Here $\nabla$ is the gradient operator, and $x_{\mathcal{N}_i}$ denote the state variables that belong to the neighborhood of the subsystem $i$ (recall that $i\in\mathcal{N}_i$\,). The CM is guaranteed to be Hurwitz, if the sum of each row of the CM $A$ turns out to be negative. Otherwise, one can do an eigenvalue analysis to determine whether or not the CM is Hurwitz. 

Note that the CM is not unique, and depends on the vector LFs computed earlier, as well as the domain of definition (also referred to as the domain of interest) $\mathcal{D}$\,. For a given set of vector LFs, the CM could be different if the domain of interest changes. This particular feature is useful in monitoring the change in the energy-flow pattern in a nonlinear dynamical network under different operating conditions.

\subsection{Energy-Flow in the Network} 

The comparison system (CS)
\begin{align}\label{E:CS}
\dot{r}(t)&=A\,r(t)\,,\quad r(0)=v(0)\,,
\end{align}
where we denote the CS states as $r=\left[
r_1\,,\,\dots\,\, r_m
\right]^T\!\!,$ provides an upper bound on the values of the level-set of the subsystem vector LFs, i.e. $V_i(x_i(t))\leq r_i(t)~\forall i\,\forall t\,.$ The comparison system can be represented as a \textit{similarity graph} $\mathcal{G}(\mathcal{V},\mathcal{E})$, where $\mathcal{V}=\lbrace i\rbrace_{i=1}^m$ represent the nodes, while $\mathcal{E}=\lbrace E_k\rbrace_{k=1}^l$ denote the directional edges (where $l$ is the number of positive off-diagonal elements in the CM). We consider the case when the CM $A$ is Hurwitz and the original system \eqref{E:fi} is guaranteed to be asymptotically stable. The goal here is to partition the network such that the \textit{worst case} energy flow between the sub-networks is minimized. However, for generic nonlinear dynamical networks of the form \eqref{E:fi}, the concept of \textit{energy-flow} along the edges is not well defined, but are usually informed by physical knowledge of the system (e.g. in power systems, the edge flow could be represented by the active power flowing from one node to another). It may be convenient to view the vector LFs as some form of energy-levels of the subsystems of the network, while the CS provides some bound on the rate of change of the energy levels of the subsystems. We use the vector LFs and their time derivatives to quantify the energy flow across edges as follows: 

\begin{definition}
Let us define the \textit{`power-flow'}, $\phi_{ij}(t)$\,, from node $j$ into the node $i$ along a directional edge $(i,j)$ in the dynamical network \eqref{E:fi} which admits vector LFs $V_i(x_i)\,\forall i$\,, as \begin{align*}
\phi_{ij}(t):=\nabla V_i^T g_{ij}(x_i,x_j)\,,
\end{align*}
and \textit{energy-flow}, $\psi_{ij}(t_0,t_f)$\,, across the directional edge $(i,j)$ between time $t_0$ and $t_f$ is defined as the following integral,
\begin{align*}
\psi_{ij}(t_0,t_f):=\int_{t_0}^{t_f}\left|\,\phi_{ij}(t)\,\right|\,dt\,.
\end{align*}
\end{definition}

Note that the \textit{power-flow} can have both positive and negative values, where the positive values are referred to as the \textit{`in-flow'} of power, while negative values are referred to as \textit{`out-flow'} of power. The \textit{energy-flow} is the time integral of the absolute value of the power-flow. Computing the power and energy-flows requires solving the nonlinear dynamics for the given initial conditions. However, for a given set of possibly (or likely) initial conditions, we can compute upper bounds on the power and energy-flow using the vector LFs. Specifically, note that, since the dynamical equations are polynomial, and if the states are bounded, we can find finite positive scalars $\alpha_{ij}\in\mathbb{R}_{\geq 0}\,\forall (i,j)\in\mathcal{E}$\,, such that 
\begin{align}
x\in\mathcal{D}\implies\left|\phi_{ij}(t)\right|\leq \alpha_{ij}\,V_j(x_j)\,~\,\forall (i,j)\in\mathcal{E}\,.
\end{align}
Recall that $x_j\!=\!0$ implies $V_j(0)\!=\!0$ and $\phi_{ij}\!=\!0$\,. The scalars $\alpha_{ij}$ are bounded (since $\nabla V_i^T$ and $g_{ij}(\cdot)$ are polynomials), but not unique and depend on the domain of interest. Since the evolution of the vector LFs are governed by the CS, a closed-form expression can be found for the upper bound of the energy-flow in the network within a domain of interest. Specifically, let us define the observable (or, output) vector of the CS as
\begin{align}
y&=C^Tr\,,\quad C=\begin{bmatrix}
c_1 & c_2 &\dots &c_l
\end{bmatrix}\in\mathbb{R}_{\geq 0}^{m\times l}
\end{align}
where each column of $C$ corresponds to the a different edge in the network. If the $k$-th edge $(i,j)\in\mathcal{E}$ is from the node $j$ to the node $i$\,, then the $k$-th column is given by $c_k=\alpha_{ij}e_j$ where $e_j$ denotes an $m$-dimensional \textit{standard basis vector} with the $j$-th entry equal to $1$ and all other entries equal to $0$\,. 

\begin{proposition}
Suppose that a nonlinear dynamical network \eqref{E:fi} is represented by the CS \eqref{E:CS} in some domain of interest $\mathcal{D}$ with a Hurwitz CM. Given a trajectory within $\mathcal{D}\,$, the net energy-flow in the network over time $t\in[0,\infty)$ is upper bounded by $-(C\,\mathbf{1}_l)^TA^{-1}v(0)$\,, while the energy-flow in the directional edge $(i,j)$ is upper bounded by $-\alpha_{ij}e_j^TA^{-1}v(0)$\,.
\end{proposition}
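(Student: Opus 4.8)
The plan is to bound the energy-flow along each edge by integrating the inequality $|\phi_{ij}(t)| \le \alpha_{ij} V_j(x_j(t))$, replacing $V_j(x_j(t))$ with the comparison-system state $r_j(t)$ (which dominates it by the comparison principle, Lemma~\ref{L:comparison}), and then evaluating the resulting integral $\int_0^\infty r_j(t)\,dt$ in closed form using the explicit solution $r(t) = e^{At}v(0)$ of the Hurwitz linear system \eqref{E:CS}.

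\textbf{Step 1: Dominate the true flow by the comparison state.} For a trajectory that stays in $\mathcal{D}$, Lemma~\ref{L:comparison} applied to $v(t)$ and $r(t)$ (with $v(0)=r(0)$, $\dot v \le Av$, $\dot r = Ar$) gives $V_j(x_j(t)) \le r_j(t)$ for all $t \ge 0$ and all $j$. Combined with the bound $|\phi_{ij}(t)| \le \alpha_{ij}V_j(x_j(t))$ that holds on $\mathcal{D}$, this yields $|\phi_{ij}(t)| \le \alpha_{ij} r_j(t)$.

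\textbf{Step 2: Integrate.} Thus $\psi_{ij}(0,\infty) = \int_0^\infty |\phi_{ij}(t)|\,dt \le \alpha_{ij}\int_0^\infty r_j(t)\,dt = \alpha_{ij} e_j^T \int_0^\infty r(t)\,dt$. Since $A$ is Hurwitz, $r(t) = e^{At}v(0) \to 0$ exponentially, so $\int_0^\infty e^{At}\,dt = -A^{-1}$ (from $\frac{d}{dt} A^{-1}e^{At} = e^{At}$ and $e^{At}\to 0$), giving $\int_0^\infty r(t)\,dt = -A^{-1}v(0)$. Hence $\psi_{ij}(0,\infty) \le -\alpha_{ij}e_j^T A^{-1} v(0)$, which is the per-edge claim. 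Note $-A^{-1}$ has non-negative entries (it equals $\int_0^\infty e^{At}dt$ and $e^{At}$ has non-negative entries because $A$ has non-negative off-diagonal entries), so the right-hand side is non-negative and the bound is meaningful. For the net energy-flow, sum over all edges $k=1,\dots,l$: using the output vector $y = C^T r$ with $c_k = \alpha_{ij}e_j$ for edge $k=(i,j)$, the total is $\sum_k \psi_{i_k j_k}(0,\infty) \le \sum_k (-c_k^T A^{-1} v(0)) = -(C\mathbf{1}_l)^T A^{-1} v(0)$, since $\sum_k c_k = C\mathbf{1}_l$.

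\textbf{The main obstacle} is making Step~1 rigorous: the comparison principle as stated requires the trajectory to remain in $\mathcal{D}$ so that the differential inequality $\dot v \le Av$ is valid along the whole trajectory, and one must ensure the level-set constraints $V_i(x_i)\le \gamma_i$ are invariant (which follows from the Hurwitz CM guaranteeing $r_i(t) \le \max_i r_i(0) \le \gamma_i$-type bounds, hence $V_i \le \gamma_i$, assuming the initial condition is chosen compatibly with $\mathcal{D}$). Once invariance of $\mathcal{D}$ is in hand, everything else is routine linear-systems computation; the only other point worth stating carefully is the sign/non-negativity of $-A^{-1}$, which is what makes the stated upper bounds genuine bounds rather than vacuous.
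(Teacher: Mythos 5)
Your proof is correct and follows essentially the same route as the paper: bound $|\phi_{ij}|$ by $\alpha_{ij}V_j$, dominate $V_j$ by the comparison state $r_j$, evaluate $\int_0^\infty e^{At}\,dt = -A^{-1}$ using the Hurwitz property, and sum over edges via the columns $c_k$ of $C$. Your added remarks on the non-negativity of $-A^{-1}$ and on the trajectory remaining in $\mathcal{D}$ (which the proposition simply assumes by hypothesis) are sensible but do not change the argument.
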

\begin{proof}
The CS is a \textit{positive system} with a CM that is Metzler (i.e. non-negative off-diagonal elements). If the CM is also Hurwitz, its inverse exists and has only non-positive entries \cite{Weissenberger:1973}. Therefore, the integral $\int_0^{\infty}\exp(At)dt$ is equal to $-A^{-1}$ with all non-negative entries. Note that,
\begin{align*}
\psi_{ij}(0,\infty)&=\int_0^\infty\left|\phi_{ij}(t)\right|\,dt\leq \int_0^\infty \alpha_{ij}V_j(x_j(t))\,dt\\
&\leq \int_0^\infty\alpha_{ij}r_j(t)\,dt=\int_0^\infty \alpha_{ij}e_j^Tr(t)\,dt\\
&=\alpha_{ij}e_j^T\int_0^\infty e^{At}\,dt\,r(0)=-\alpha_{ij}e_j^TA^{-1}v(0)
\end{align*}
The total energy-flow across all edges is given by 
\begin{align*}
\sum_{(i,j)\in\mathcal{E}}\!\!\!\!\psi_{ij}(0,\infty)\leq\sum_{k=1}^l-c_k^TA^{-1}v(0)=-(C\,\mathbf{1}_l)^TA^{-1}v(0)\,.\!\!\!\!\!\qed
\end{align*}\end{proof}

Note that, since $-A^{-1}$ has only non-negative entries, the upper bound on the energy flow in any edge is monotonically increasing with respect to the initial level-sets, $v(0)$, of the vector LFs. This is useful if we want to compute the \textit{worst-case} energy flow in the network given some domain of interest. For example, if the domain of interest is defined in the form of $\mathcal{D}= \left\lbrace x\in\mathbb{R}^n\left| \,V_i(x_i)\leq \gamma_i~\forall i\right.\right\rbrace$, for some $\gamma_i\leq 1$\,, then the \textit{worst-case} upper bound on the energy flow across the $k$-th edge is $-c_k^TA^{-1}\Gamma$\,, where $\Gamma=\begin{bmatrix}
\gamma_1&\dots&\gamma_m
\end{bmatrix}^T$.

\subsection{$K$-means Clustering on Comparison System}

The formulation of energy-flow along the edges proposed here allows us to quantify the energy-flow pattern in the network given an initial condition. It is expected that, under different operating conditions, the energy-flow pattern of the network changes and henceforth the decomposition that minimizes the energy-flow between sub-networks may be different as well. While the exact operating conditions may not be known \textit{apriori}, one can proceed with the analysis of the \textit{worst-case} energy-flow within some domain of interest. 

For a domain of interest $\mathcal{D}\!=\! \left\lbrace x\in\mathbb{R}^n\left| \,v(t)\leq \Gamma~\forall t\right.\right\rbrace$\,, where $\Gamma\leq 1$ (element-wise), we use the \textit{worst-case} upper bound on the energy flow across the edge in either direction to construct a symmetric \textit{weighted adjacency matrix} of the network. Specifically, we construct the symmetric weighted adjacency matrix $W(\mathcal{G})=[w_{ij}]$ as,
\begin{align*}
w_{ij}=\left\lbrace\begin{array}{c} \max\lbrace -\alpha_{ij}e_j^TA^{-1}\Gamma\,,\,-\alpha_{ji}e_i^TA^{-1}\Gamma\rbrace,\quad (i,j)\in\mathcal{E}\\
0\,,\qquad\qquad \text{otherwise}
\!\!\end{array}\!\!\right.\!\!
\end{align*}

Note, of course, that while this construction of weighted adjacency matrix is concerned with the \textit{worst-case} energy-flow in a domain of definition, the formulation can be adapted to investigate the energy-flow under a given set of initial conditions, by replacing $\Gamma$ with $v(0)$\,. The final step involves choosing the desired number of sub-networks and applying the $K$-means clustering algorithm on the weighted adjacency matrix $W$\,.

\section{Simulation Results}\label{S:results}
In this section we present some numerical examples to illustrate how the proposed method can be used to compute the energy-flow across nonlinear dynamical networks under varying operating conditions, and decompose the network minimizing energy-flow between sub-networks. In the figures presented below, the width of the line between any two nodes represents the relative values of the energy flowing through the edge (i.e. thicker lines represent higher energy-flow).

\subsection{Lotka-Volterra System}

Our first example is a Lotka-Volterra system that describes the evolution of population in a network of $16$ communities:
\begin{align}\label{E:LV}
\dot{x}_i&=\left(b_i-x_i\right)x_i-{\sum}_{j\in\mathcal{N}_i\backslash\lbrace i\rbrace}x_j\left(c_{ij}+d_{ij}x_i\right)
\end{align}
where $i=1,2,\dots,16,$\,. We use a similar model as in \cite{Anderson:2012}, with slightly modified network structure and interaction coefficients ($c_{ij}$ and $d_{ij}$) that are scaled down by a factor of $0.3$\,, in order to facilitate the construction of comparison systems over a wider range of operating conditions. After shifting the equilibrium point of the system to origin, the Lotka-Volterra system is expressed in the form \eqref{E:fi}. Quadratic vector LFs $V_i(x_i)$ are computed for each community, such that $\mathcal{R}_i=\left\lbrace x_i\left|\,V_i(x_i)\leq 1\right.\right\rbrace$ is an estimation of the region of attraction. Defining the domain of interest as $\mathcal{D}=\left\lbrace x\in\mathbb{R}^{16}\left|\,V_i(x_i)\leq \gamma_i~\forall i\right.\right\rbrace$, for some $\gamma_i\leq 1$\,, a CS is constructed based on which the energy-flow pattern of the network can be computed. 

\begin{figure*}[thpb]
\centering
\captionsetup{justification=centering}
\subfigure[$\gamma_i=0.01$ (calculated)]{
\includegraphics[scale=0.25]{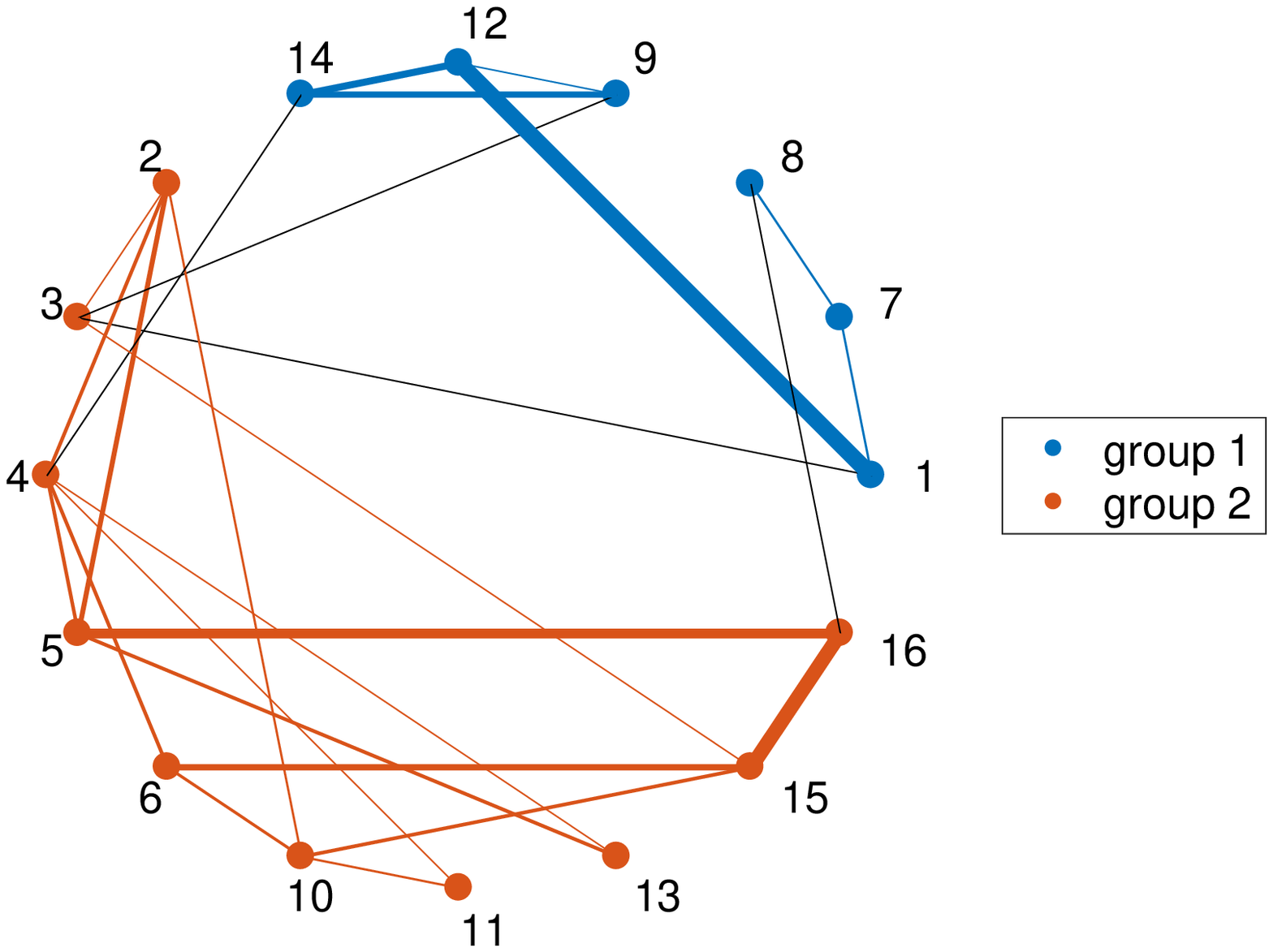}\label{F:LV_low_cal}
}
\hspace{0.01in}
\subfigure[$\gamma_i=0.01$ (simulated)]{
\includegraphics[scale=0.25]{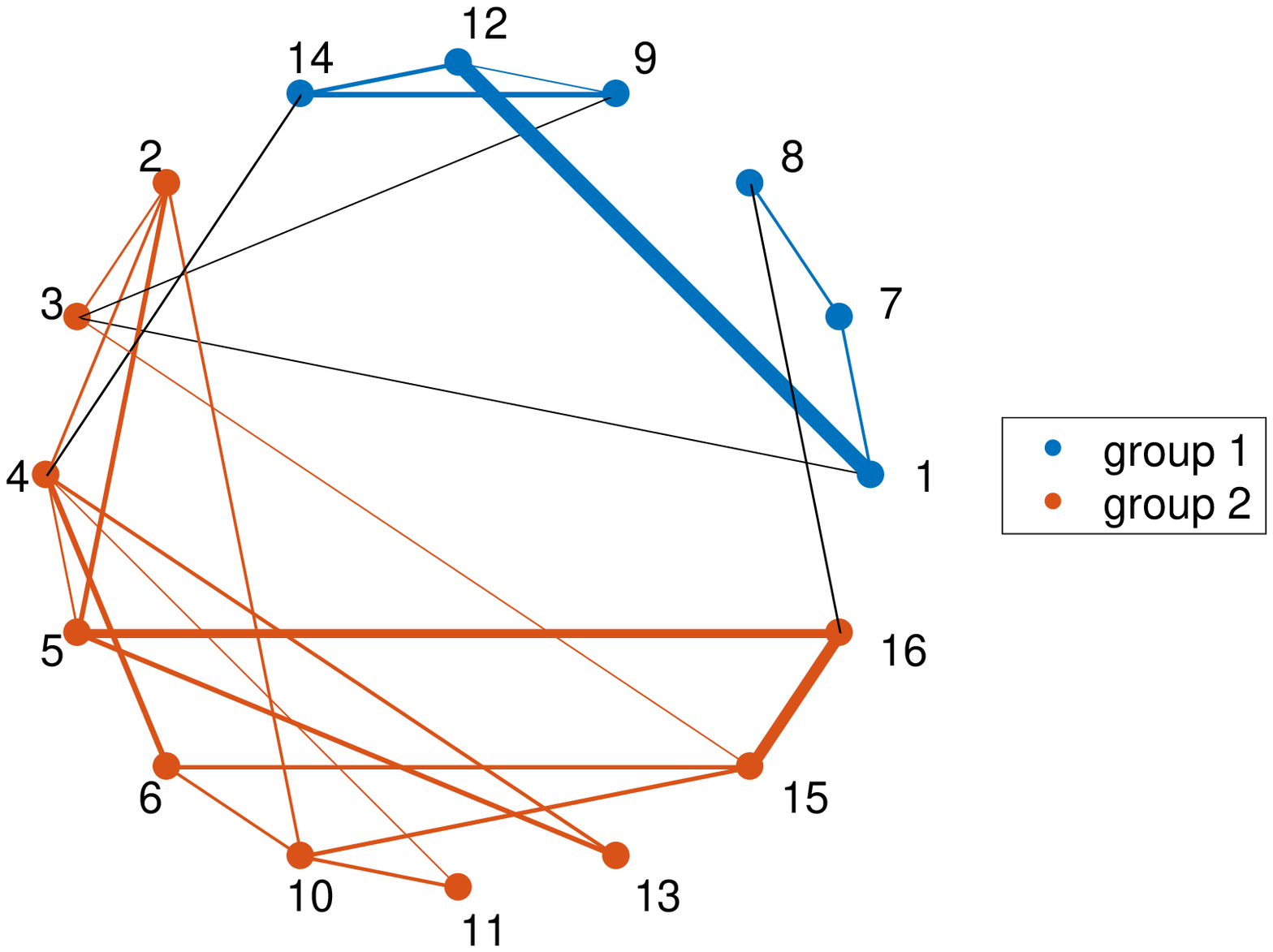}\label{F:LV_low_sim}
}\hspace{0.01in}
\subfigure[$\gamma_i=0.6$ (calculated)]{
\includegraphics[scale=0.25]{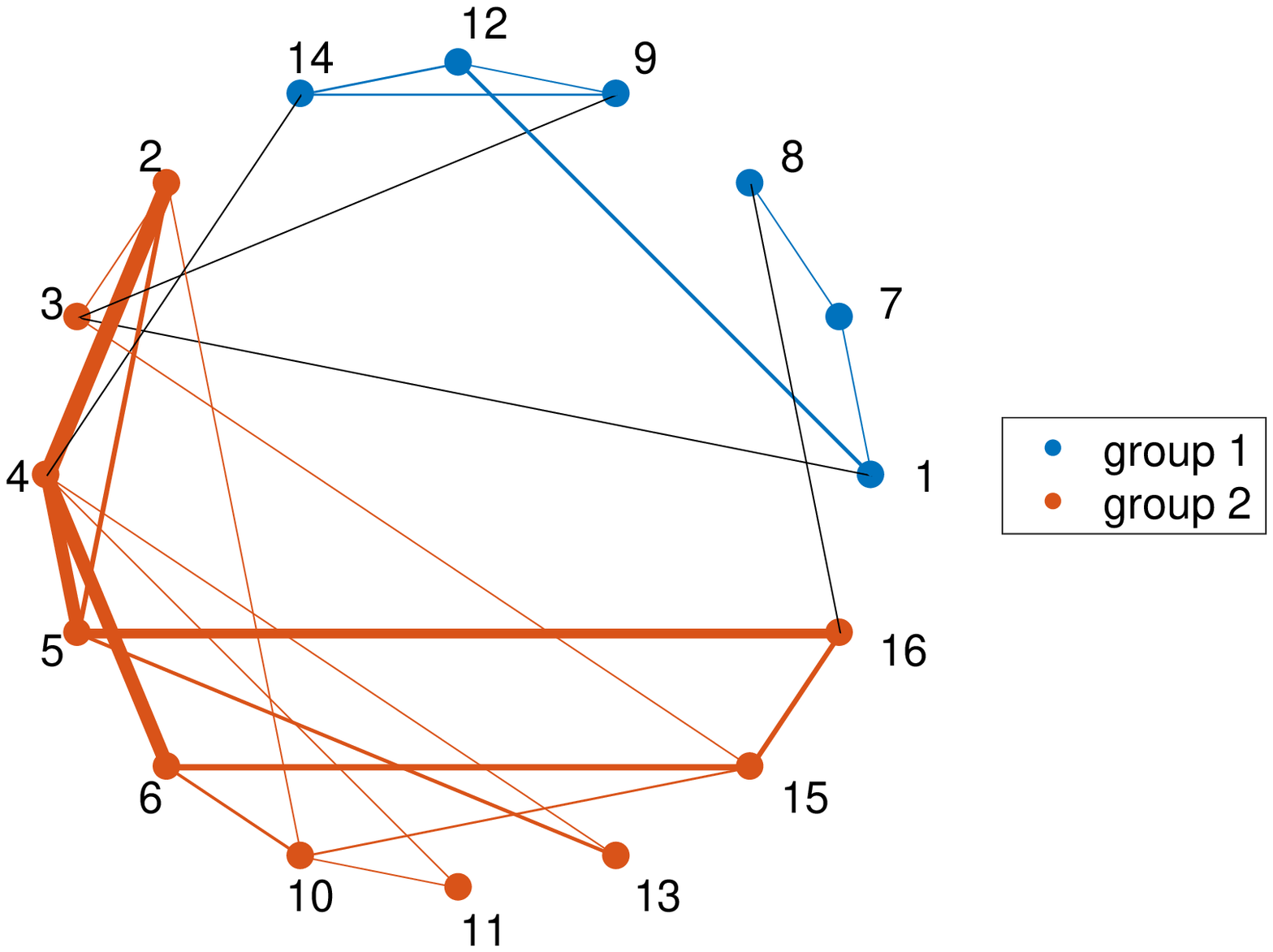}\label{F:LV_high_cal}
}
\hspace{0.01in}
\subfigure[$\gamma_i=0.6$ (simulated)]{
\includegraphics[scale=0.25]{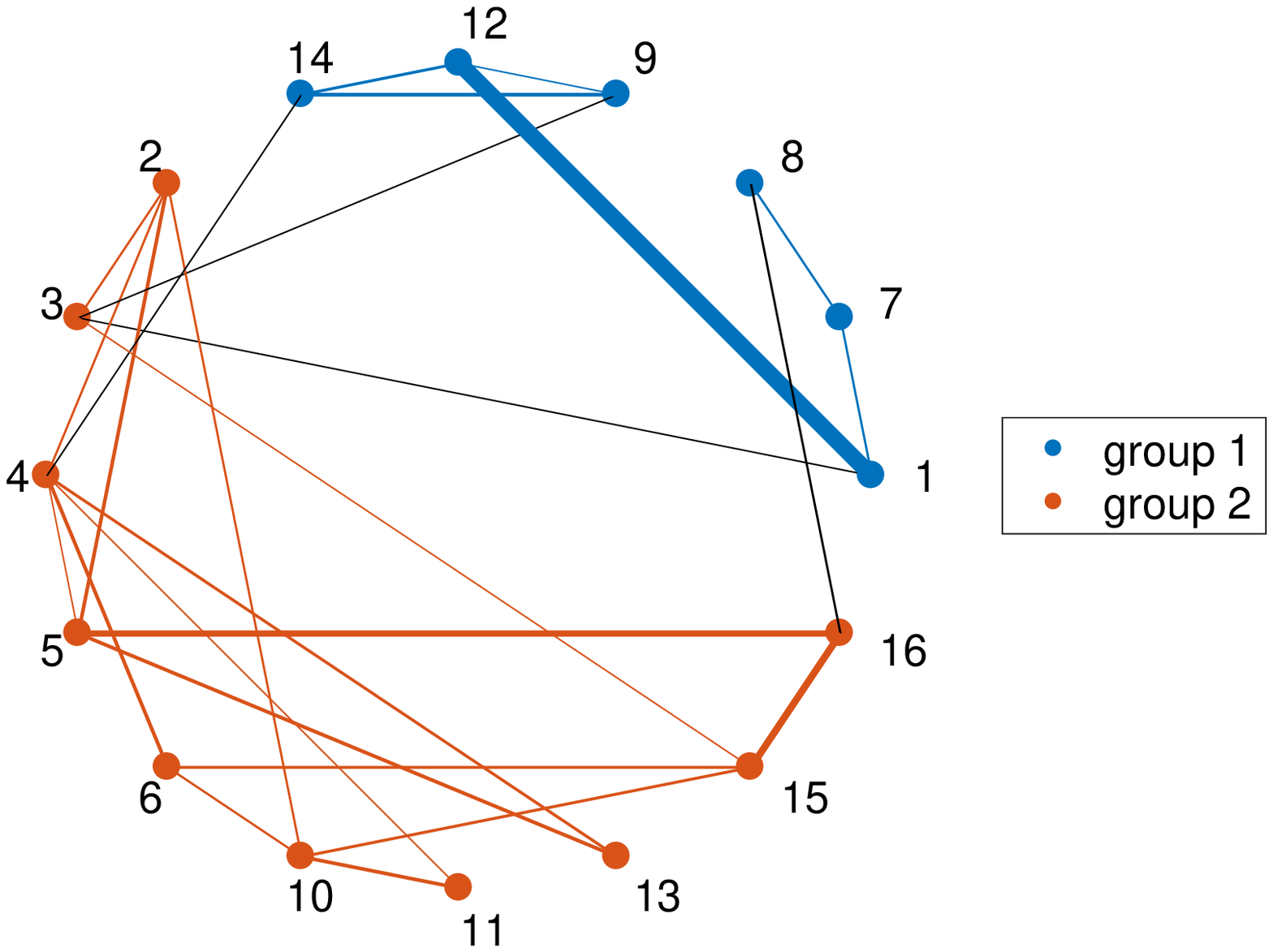}\label{F:LV_high_sim}
}
\caption[Optional caption for list of figures]{Calculated and simulated energy-flow patterns and decomposition for varying domains of interest in \eqref{E:LV}.}
\label{F:LV_low_high}
\end{figure*}

Fig.\,\ref{F:LV_low_high} shows calculated (based on the entries of the weighted adjacency matrix $W=[w_{ij}]$) and simulated (by randomly selecting an initial condition and evaluating the maximum value of $\psi_{ij}(0,\infty)$ over a number of scenarios) energy-flow patterns in the network for different domains of interest, and the associated decomposition into two sub-networks. When the domain of interest is defined to be a close neighborhood of the equilibrium point (by choosing $\gamma_i=0.01\,\forall i\,$), the simulated and calculated energy patterns look similar resulting in the same decomposition. When we choose $\gamma_i=0.6\,\forall i$\,, the calculated and simulated energy-flow patterns differ, although the decomposition remains the same. Note that, even when the energy-flow patterns differ in the calculation and simulation, the energy-flow between the sub-networks turns out to be small in both  cases. 

\begin{figure*}[thpb]
\centering
\captionsetup{justification=centering}
\subfigure[$V_i(x_i(0))$ high for group 1 nodes]{
\includegraphics[scale=0.29]{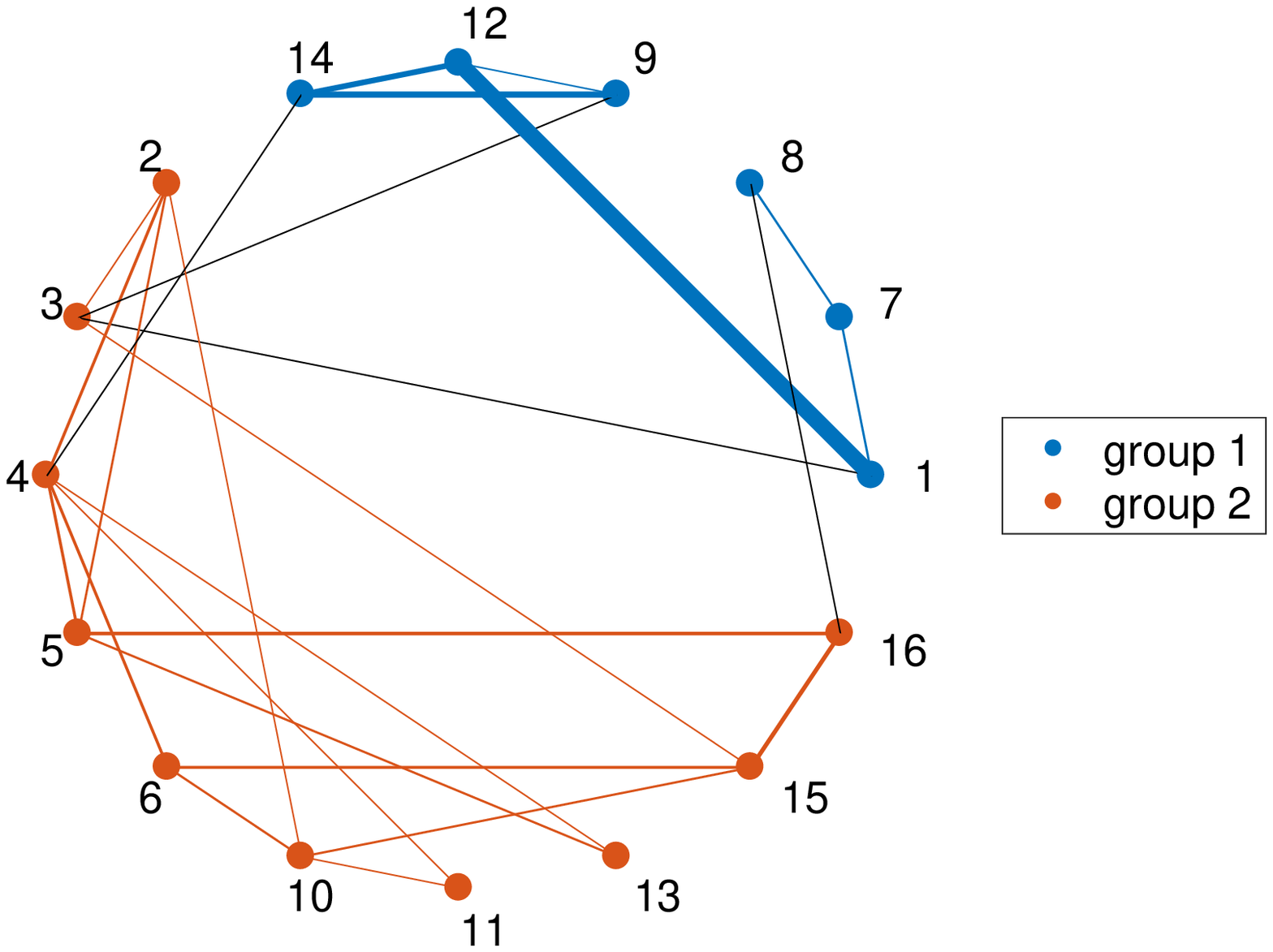}\label{F:LV_gr1}
}
\hspace{0.01in}
\subfigure[$V_i(x_i(0))$ high for group 2 nodes]{
\includegraphics[scale=0.29]{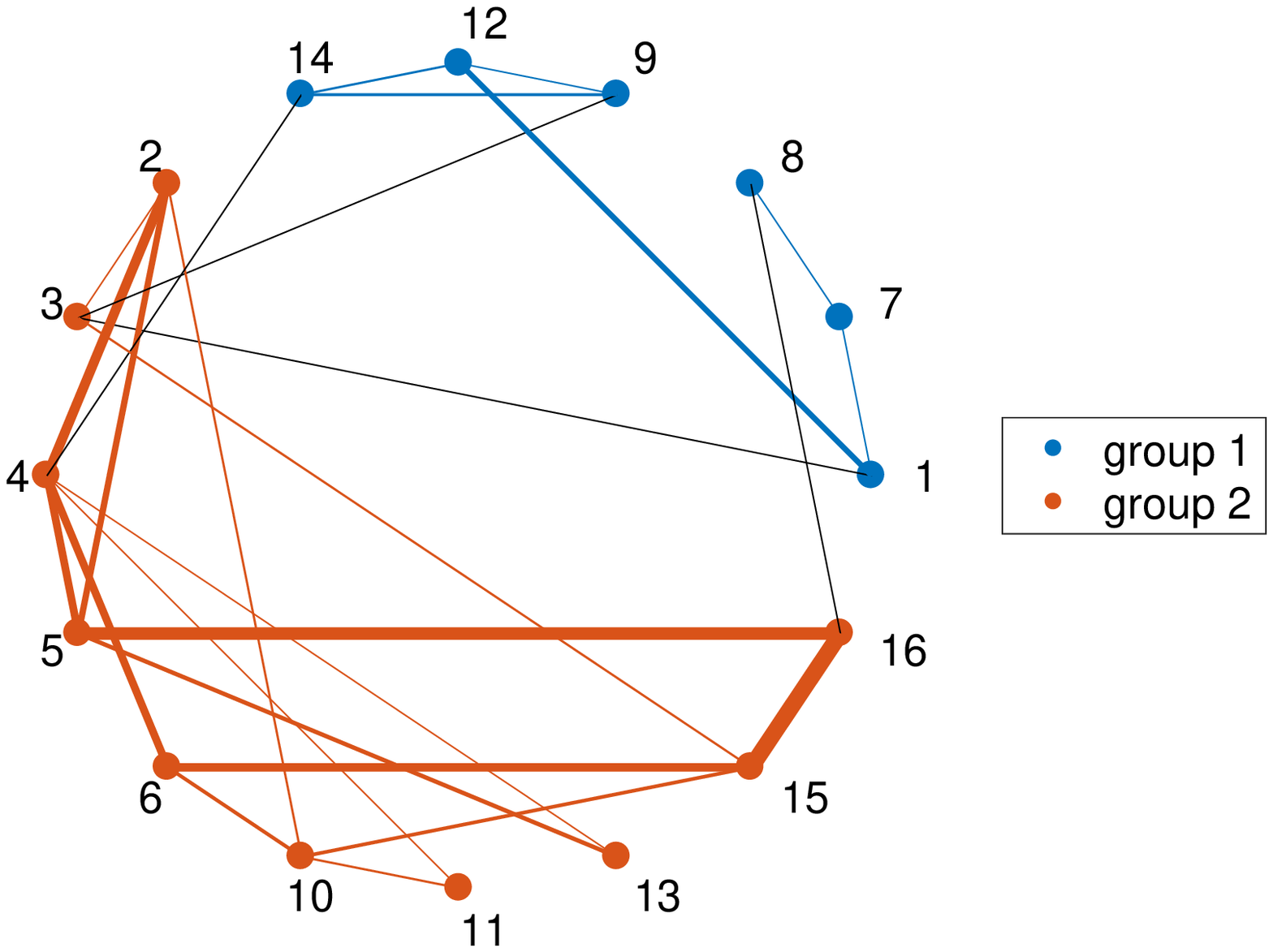}\label{F:LV_gr2}
}
\hspace{0.01in}
\subfigure[$V_i(x_i(0))$ high for $\!\lbrace 1,3,4,8,9,14,16\rbrace\!$]{
\includegraphics[scale=0.29]{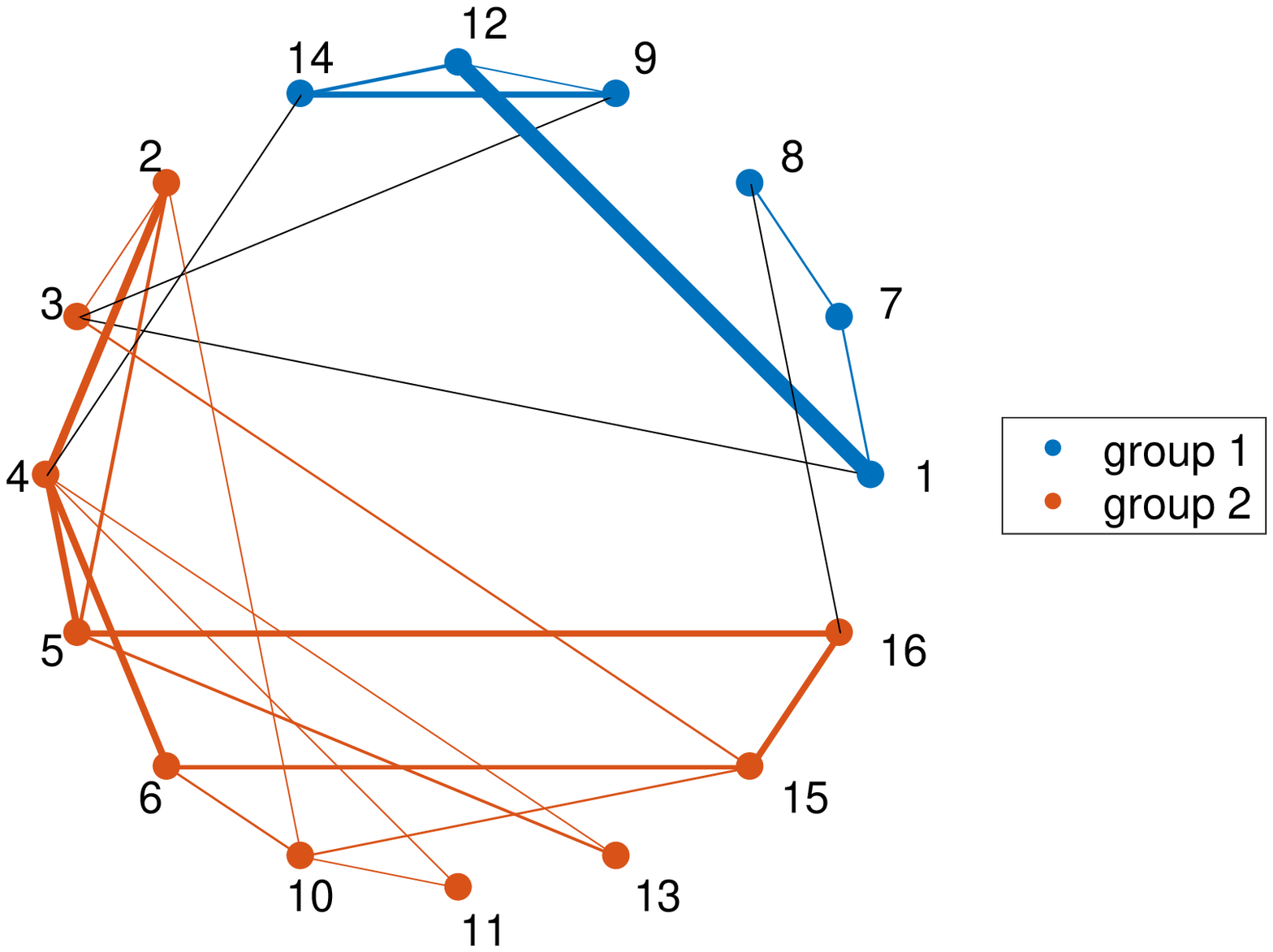}\label{F:LV_cross}
}
\caption[Optional caption for list of figures]{Calculated energy-flow patterns and decomposition of \eqref{E:LV} for varying initial conditions: high values of initial level-sets ($V_i(x_i(0))=0.5$) at selected nodes and low values ($V_i(x_i(0))=0.1$) at others.}
\label{F:LV_gr_cross}
\end{figure*}

Fig.\,\ref{F:LV_gr_cross} shows the energy-flow patterns when the initial condition of the network, determined by the initial energy level $V_i(x_i(0))$ of the nodes, is varied. Specifically, we consider three different scenarios - 1) when the initial level-sets of the vector LFs of the nodes in group-$1$ are set at high value of $0.5$ and the nodes in group-$1$ are assigned a low value of $0.1$ (Fig.\,\ref{F:LV_gr1}), 2) when group-$2$ nodes are assigned high initial level-sets and group-$1$ nodes low initial level-sets (Fig.\,\ref{F:LV_gr2}), and 3) when the level-sets of the nodes at the interconnection of the two sub-networks are assigned high initial level-sets and others low initial level-sets (Fig.\,\ref{F:LV_cross})\,. We can see that in all three scenarios, the energy-flows between two sub-networks remain small, while the energy-flow patterns within each group varies. This shows that such a decomposition indeed favors a distributed analysis and control design, by dividing the network into weakly interacting subsystems.

\subsection{Network of Van der Pol Systems}

Next we consider another example of a nonlinear network, composed of 9 Van der Pol systems from \cite{Kundu:2017Multiple}, described by
\begin{subequations}\label{E:VP}
\begin{align}
&\dot{x}_i=f_i(x_i)+\sum_{j\in\mathcal{N}_i\backslash\lbrace i\rbrace}g_{ij}(x_i,x_j)\\
&f_i(x_i)\!=\! \begin{bmatrix}
x_{i,2}\\
\mu_i\,x_{i,2}(c_i^{(1)}\!\!-\!c_i^{(2)}x_{i,1}\!-\!x_{i,1}^2) \!-\! c_i^{(3)}x_{i,1}\end{bmatrix} \\
&				g_{ij}(x_i,x_j)\!=\! \left[0\,,~\beta_{ij}^{(1)}\,x_{j,2} + \beta_{ij}^{(2)}\,x_{j,2}\,x_{i,1}\right]^T\,.
\end{align}\end{subequations}
where, $c_i^{(1)}\!=\!1\!-\!\left(0.5\,c_i^{(2)}\right)^2$, $c_i^{(3)}\!=\!1\!-\!{\sum}_{j\in\mathcal{N}_i\lbrace i\rbrace}({0.5\,\beta_{ij}^{(2)}c_i^{(2)}}\!-\!\beta_{ij}^{(1)})$, $\mu_i\,,\,\beta_{ij}^{(1)}$ and $\beta_{ij}^{(2)}$ are chosen randomly and $c_i^{(2)}$ are related to the equilibrium point before shifting. Vector LFs for the nine nodes are computed using sum-of-squares methods. Fig.\,\ref{F:VP_gr_cross} shows the energy-flow patterns and decomposition for varying operating conditions. Specifically, we choose a domain of interest by selecting $\gamma_i=0.6\,\forall i$\,, and plot the worst-case energy flow pattern in Fig.\,\ref{F:VP_high}, along with the decomposition. Keeping the same decomposition, we then monitor the change in energy-flow patterns as we vary the initial conditions across the network. In specific, we consider three scenarios: 1) choose high initial level-sets (equal to 0.6) for the group 1 nodes and low level-sets (equal to 0.1) for others (Fig.\,\ref{F:VP_gr1}), 2) high initial level-sets (=0.6) for group 2 nodes and low (=0.1) for others (Fig.\,\ref{F:VP_gr2}), and 3) high level-sets (=0.6) only for the nodes 1,\,5,\,7 and 8 that connect the two sub-networks. We observe that most of the energy-flow is contained within the sub-network, with minimal flow between the two sub-networks, implying that the decomposition yields a weakly interacting network.

\begin{figure*}[thpb]
\centering
\captionsetup{justification=centering}
\subfigure[$\gamma_i=0.6$ for each node]{
\includegraphics[scale=0.30]{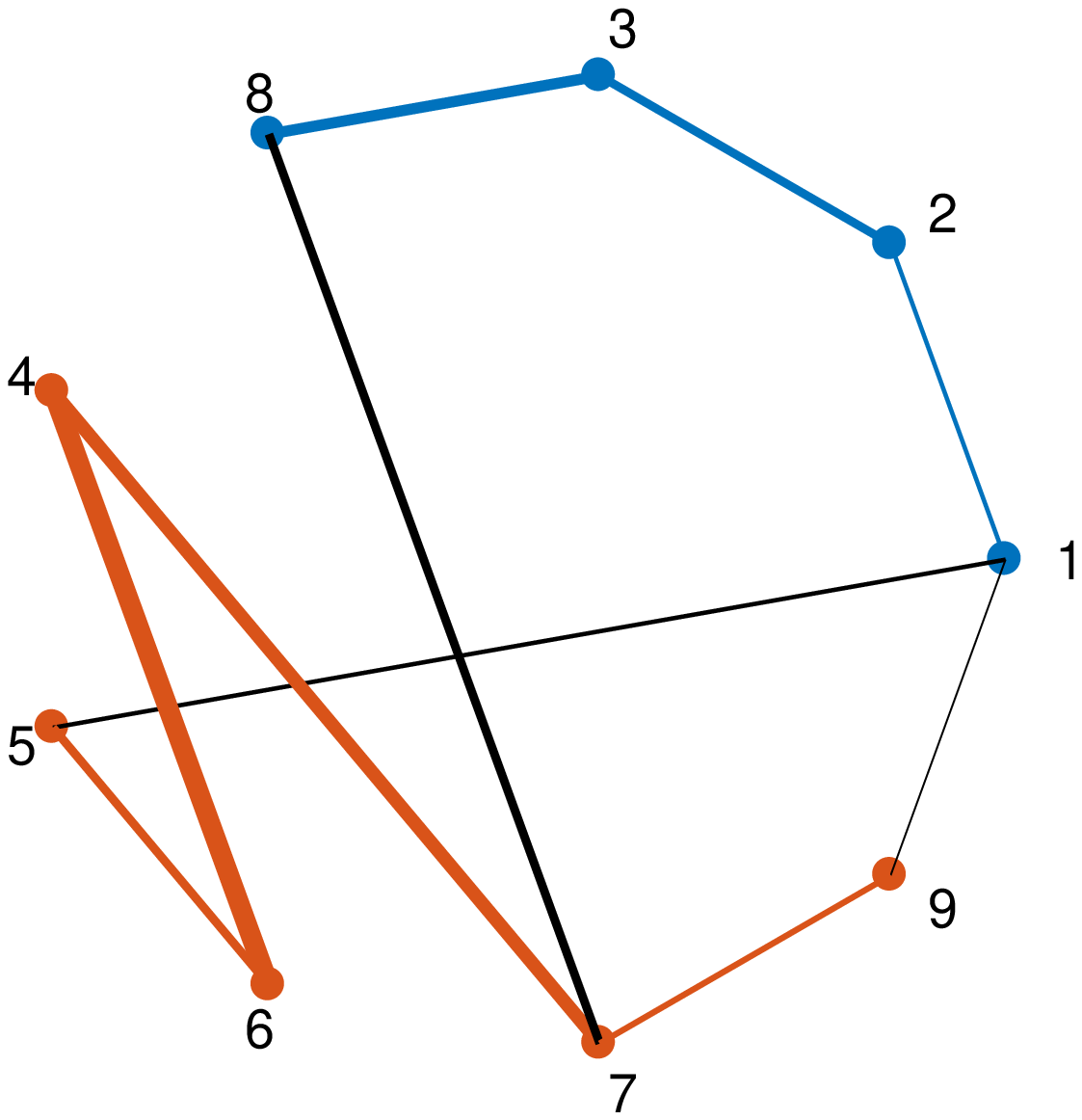}\label{F:VP_high}
}
\hspace{0.01in}
\subfigure[$V_i(x_i(0))$ high for group 1]{
\includegraphics[scale=0.30]{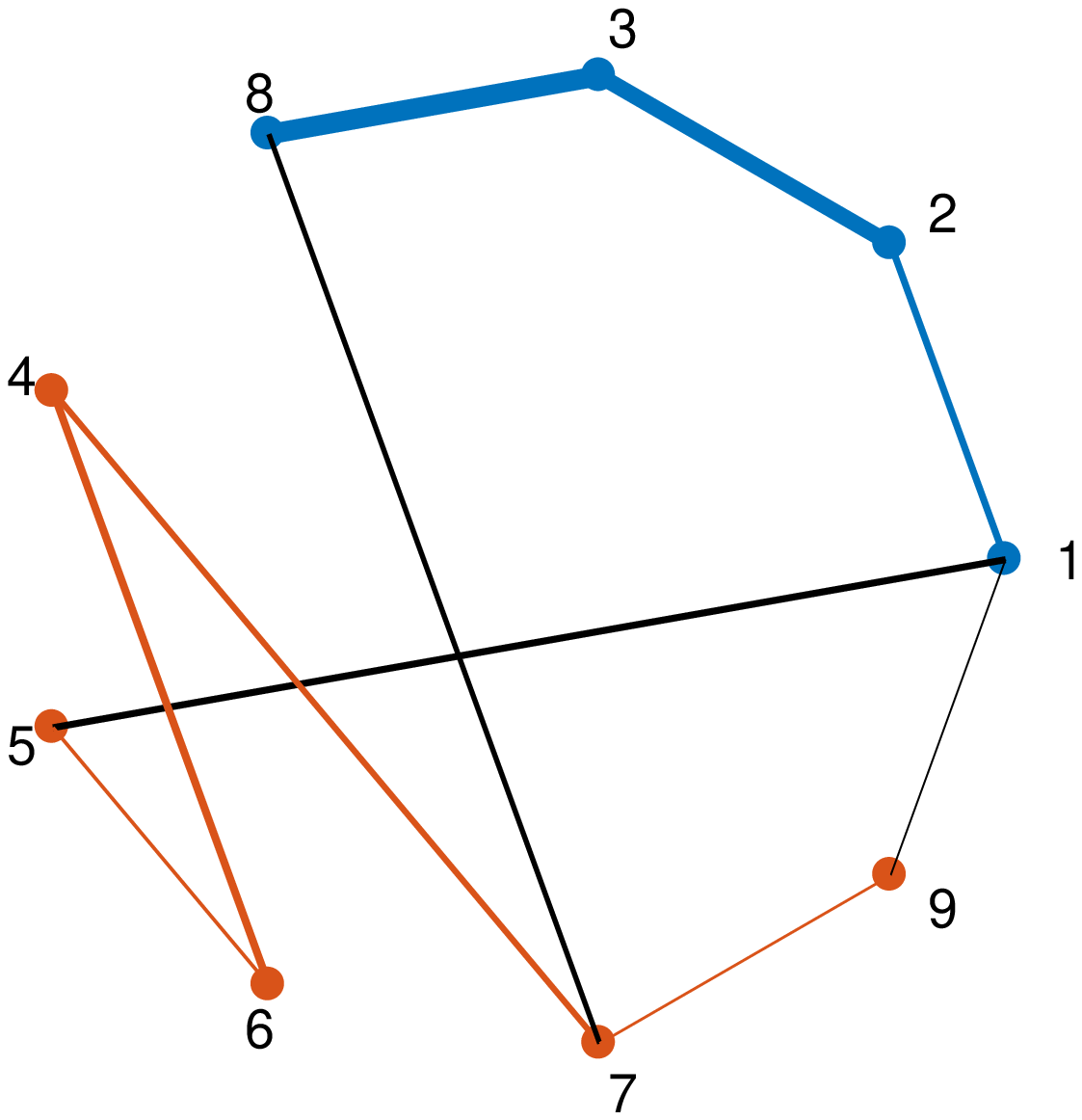}\label{F:VP_gr1}
}\hspace{0.01in}
\subfigure[$V_i(x_i(0))$ high for group 2]{
\includegraphics[scale=0.30]{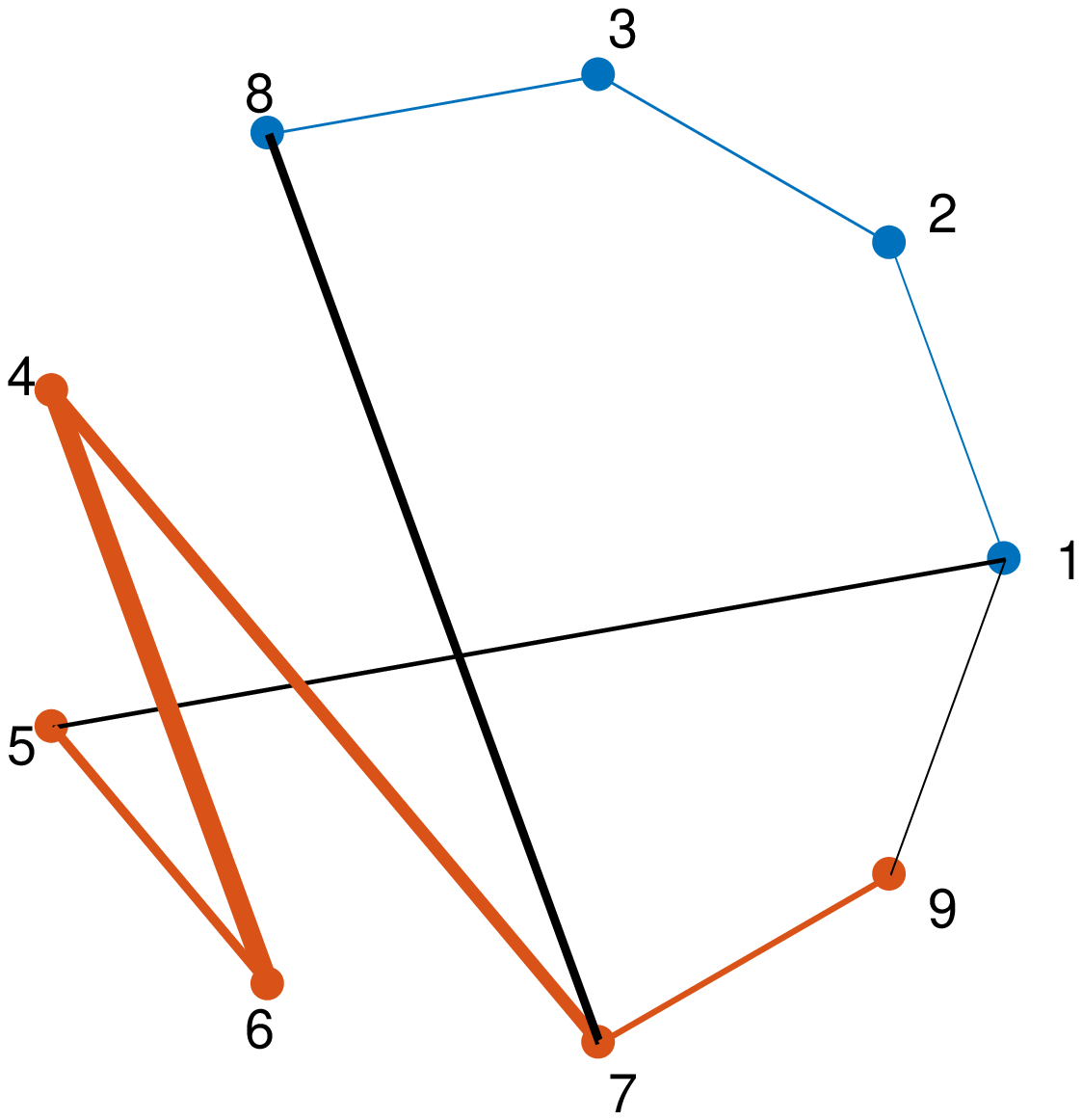}\label{F:VP_gr2}
}
\hspace{0.01in}
\subfigure[$V_i(x_i(0))$ high for $\!\!\lbrace 1,5,7,8\rbrace\!$]{
\includegraphics[scale=0.30]{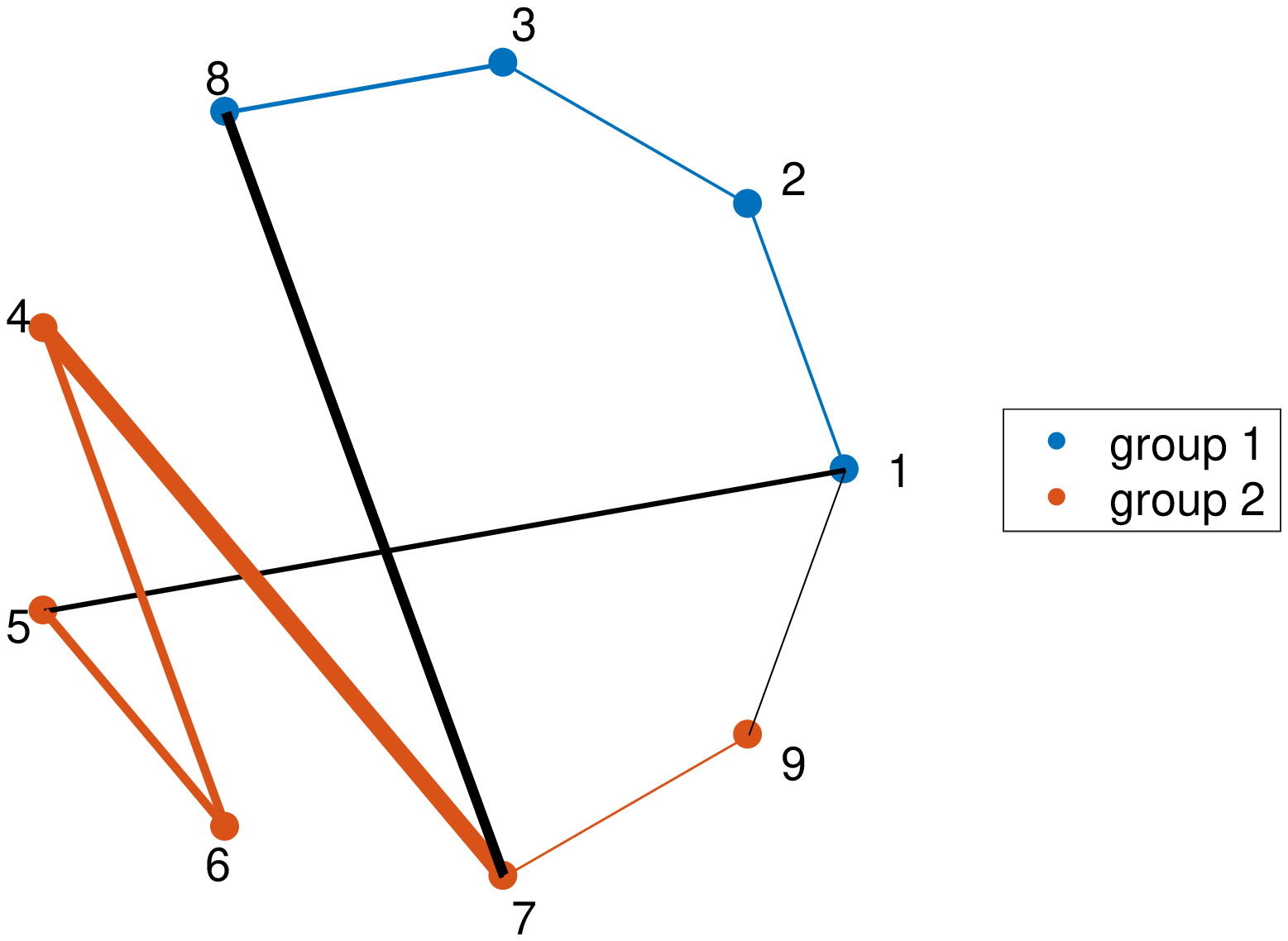}\label{F:VP_cross}
}
\caption[Optional caption for list of figures]{Calculated energy-flow patterns and decomposition for \eqref{E:VP}, under varying initial conditions (defined by $V_i(x_i(0))\,$).}
\label{F:VP_gr_cross}
\end{figure*}

\section{Conclusion}\label{S:concl}
In this article we considered the problem of decomposing a nonlinear network into weakly interacting subsystems, to facilitate distributed analysis and control design. Using a vector Lyapunov functions based approach we showed that the evolution of the energy levels of the nodes of the system can be modeled via a linear comparison system. Further, introducing a notion of power and energy flows between two nodes in a dynamical network, we proposed a method to compute the dynamic edge weights in the network. Finally, using spectral clustering techniques on the weighted adjacency matrix the nonlinear dynamical network is decomposed into weakly interacting subsystems. Sum-of-squares programming tools are used to demonstrate the working on the algorithm on two examples of nonlinear networks. Future work will investigate the applicability of this method to larger systems (interested readers are referred to the recent work in \cite{Ahmadi:2017}), and to real-world problems such as coherency detection in power systems.


\section*{Acknowledgment}

This work was supported by the United States Department
of Energy (contract DE-AC02-76RL01830) under the Control of Complex Systems Initiative at Pacific Northwest National Laboratory.



\bibliographystyle{IEEEtran}
\bibliography{IEEEabrv,references,RefKundu}

\begin{thebibliography}{10}
\providecommand{\url}[1]{#1}
\csname url@samestyle\endcsname
\providecommand{\newblock}{\relax}
\providecommand{\bibinfo}[2]{#2}
\providecommand{\BIBentrySTDinterwordspacing}{\spaceskip=0pt\relax}
\providecommand{\BIBentryALTinterwordstretchfactor}{4}
\providecommand{\BIBentryALTinterwordspacing}{\spaceskip=\fontdimen2\font plus
\BIBentryALTinterwordstretchfactor\fontdimen3\font minus
  \fontdimen4\font\relax}
\providecommand{\BIBforeignlanguage}[2]{{%
\expandafter\ifx\csname l@#1\endcsname\relax
\typeout{** WARNING: IEEEtran.bst: No hyphenation pattern has been}%
\typeout{** loaded for the language `#1'. Using the pattern for}%
\typeout{** the default language instead.}%
\else
\language=\csname l@#1\endcsname
\fi
#2}}
\providecommand{\BIBdecl}{\relax}
\BIBdecl

\bibitem{Bullo:2009}
F.~Bullo, J.~Cort\'es, and S.~Martinez, \emph{Distributed Control of Robotic
  Networks}.\hskip 1em plus 0.5em minus 0.4em\relax Princeton, New Jersey:
  Princeton University Press, 2009.

\bibitem{Siljak:2010}
A.~I. Ze\v{c}evi\'{c} and D.~D. \v{S}iljak, \emph{Control of Complex Systems:
  Structural Constraints and Uncertainty}, ser. Communications and Control
  Engineering.\hskip 1em plus 0.5em minus 0.4em\relax New York: Springer, 2010.

\bibitem{chiang2016large}
N.-Y. Chiang and V.~M. Zavala, ``Large-scale optimal control of interconnected
  natural gas and electrical transmission systems,'' \emph{Applied Energy},
  vol. 168, pp. 226--235, 2016.

\bibitem{li2008interdependency}
T.~Li, M.~Eremia, and M.~Shahidehpour, ``Interdependency of natural gas network
  and power system security,'' \emph{IEEE Transactions on Power Systems},
  vol.~23, no.~4, pp. 1817--1824, 2008.

\bibitem{pereira2016joint}
S.~J. Pereira-Cardenal, B.~Mo, A.~Gjelsvik, N.~D. Riegels, K.~Arnbjerg-Nielsen,
  and P.~Bauer-Gottwein, ``Joint optimization of regional water-power
  systems,'' \emph{Advances in Water Resources}, vol.~92, pp. 200--207, 2016.

\bibitem{chiang2007layering}
M.~Chiang, S.~H. Low, A.~R. Calderbank, and J.~C. Doyle, ``Layering as
  optimization decomposition: A mathematical theory of network architectures,''
  \emph{Proceedings of the IEEE}, vol.~95, no.~1, pp. 255--312, 2007.

\bibitem{Kundu:2015NecSys}
S.~Kundu and M.~Anghel, ``Distributed coordinated control of large-scale
  nonlinear networks,'' \emph{IFAC-PapersOnLine}, vol.~48, no.~22, pp.
  240--245, 2015.

\bibitem{Kundu:2017Multiple}
------, ``A multiple-comparison-systems method for distributed stability
  analysis of large-scale nonlinear systems,'' \emph{Automatica}, vol.~78, pp.
  25--33, 2017.

\bibitem{Anderson:2011}
J.~Anderson, Y.-C. Chang, and A.~Papachristodoulou, ``Model decomposition and
  reduction tools for large-scale networks in systems biology,''
  \emph{Automatica}, vol.~47, no.~6, pp. 1165--1174, 2011.

\bibitem{Anderson:2012}
J.~Anderson and A.~Papachristodoulou, ``A decomposition technique for nonlinear
  dynamical system analysis,'' \emph{IEEE Transactions on Automatic Control},
  vol.~57, no.~6, pp. 1516--1521, 2012.

\bibitem{Liu:2017}
Z.~Liu, S.~Kundu, L.~Chen, and E.~Yeung, ``{Decomposition of Nonlinear
  Dynamical Systems Using Koopman Gramians},'' \emph{arXiv preprint
  arXiv:1710.01719}, 2017.

\bibitem{Bailey:1966}
F.~N. Bailey, ``The application of {Lyapunov's} second method to interconnected
  systems,'' \emph{J. SIAM Control}, vol.~3, pp. 443 -- 462, 1966.

\bibitem{Araki:1978}
M.~Araki, ``Stability of large-scale nonlinear systems � quadratic-order
  theory of composite-system method using {M-}matrices,'' \emph{IEEE
  Transactions on Automatic Control}, vol.~23, no.~2, pp. 129 -- 142, 1978.

\bibitem{Michel:1983}
N.~Michel, A.\, ``On the status of stability of interconnected systems,''
  \emph{IEEE Transactions on Automatic Control}, vol.~28, no.~6, pp. 639--653,
  1983.

\bibitem{Siljak:1972}
D.~D. \v{S}iljak, ``Stability of large-scale systems under structural
  perturbations,'' \emph{Systems, Man and Cybernetics, IEEE Transactions on},
  vol. SMC-2, no.~5, pp. 657--663, Nov 1972.

\bibitem{Weissenberger:1973}
S.~Weissenberger, ``Stability regions of large-scale systems,''
  \emph{Automatica}, vol.~9, no.~6, pp. 653--663, 1973.

\bibitem{Kundu:2015CDC}
S.~Kundu and M.~Anghel, ``Computation of linear comparison equations for
  stability analysis of interconnected systems,'' in \emph{54th IEEE Conf. on
  Decision and Control}, Dec 2015, pp. 1762--1768.

\bibitem{Kundu:2015ACC}
------, ``A sum-of-squares approach to the stability and control of
  interconnected systems using vector {Lyapunov} functions,'' in \emph{The 2015
  American Control Conference}, July 2015, pp. 5022--5028.

\bibitem{Lyapunov:1892}
A.~M. Lyapunov, \emph{The General Problem of the Stability of Motion}.\hskip
  1em plus 0.5em minus 0.4em\relax Kharkov, Russia: Kharkov Math. Soc., 1892.

\bibitem{Khalil:1996}
H.~K. Khalil, \emph{{Nonlinear Systems}}.\hskip 1em plus 0.5em minus
  0.4em\relax New Jersey: Prentice Hall, 1996.

\bibitem{Genesio:1985}
R.~Genesio, M.~Tartaglia, and A.~Vicino, ``On the estimation of asymptotic
  stability regions: {State} of the art and new proposals,'' \emph{IEEE
  Transactions on Automatic Control}, vol.~30, no.~8, pp. 747--755, Aug. 1985.

\bibitem{Wloszek:2003}
Z.~W. Jarvis-Wloszek, ``{Lyapunov} based analysis and controller synthesis for
  polynomial systems using sum-of-squares optimization,'' Ph.D. dissertation,
  University of California, Berkeley, CA, 2003.

\bibitem{Parrilo:2000}
P.~A. Parrilo, ``Structured semidefinite programs and semialgebraic geometry
  methods in robustness and optimization,'' Ph.D. dissertation, Caltech,
  Pasadena, CA, 2000.

\bibitem{Tan:2006}
W.~Tan, ``Nonlinear control analysis and synthesis using sum-of-squares
  programming,'' Ph.D. dissertation, University of California, Berkeley, CA,
  2006.

\bibitem{Anghel:2013}
M.~Anghel, F.~Milano, and A.~Papachristodoulou, ``Algorithmic construction of
  {Lyapunov} functions for power system stability analysis,'' \emph{Circuits
  and Systems I: Regular Papers, IEEE Transactions on}, vol.~60, no.~9, pp.
  2533--2546, Sep 2013.

\bibitem{sostools13}
A.~Papachristodoulou, J.~Anderson, G.~Valmorbida, S.~Prajna, P.~Seiler, and
  P.~A. Parrilo, ``{SOSTOOLS}: Sum of squares optimization toolbox for
  {MATLAB},'' 2013, available from
  \texttt{http://www.eng.ox.ac.uk/control/sostools}.

\bibitem{Antonis:2005a}
S.~Prajna, A.~Papachristodoulou, P.~Seiler, and P.~A. Parrilo, \emph{{Positive
  Polynomials in Control}}.\hskip 1em plus 0.5em minus 0.4em\relax
  Springer-Verlag, 2005, ch. SOSTOOLS and Its Control Applications, pp.
  273--292.

\bibitem{Sturm:1999}
J.~F. Sturm, ``Using {SeDuMi} 1.02, a {MATLAB} toolbox for optimization over
  symmetric cones,'' \emph{Optimization Methods and Software}, vol. 11-12, pp.
  625--653, Dec. 1999, software available at
  http://fewcal.kub.nl/sturm/software/sedumi.html.

\bibitem{Wloszek:2005}
Z.~J. Wloszek, R.~Feeley, W.~Tan, K.~Sun, and A.~Packard, \emph{{Positive
  Polynomials in Control}}.\hskip 1em plus 0.5em minus 0.4em\relax Berlin,
  Heidelberg: Springer-Verlag, 2005, ch. Control Applications of Sum of Squares
  Programming, pp. 3--22.

\bibitem{Antonis:2005}
A.~Papachristodoulou and S.~Prajna, \emph{{Positive Polynomials in
  Control}}.\hskip 1em plus 0.5em minus 0.4em\relax Springer-Verlag, 2005, ch.
  Analysis of non-polynomial systems using the sum of squares decomposition,
  pp. 23--43.

\bibitem{Chesi:2010a}
G.~Chesi, ``{LMI} techniques for optimization over polynomials in control: a
  survey,'' \emph{IEEE Transactions on Automatic Control}, vol.~55, no.~11, pp.
  2500 -- 2510, 2010.

\bibitem{Putinar:1993}
M.~Putinar, ``Positive polynomials on compact semi-algebraic sets,''
  \emph{Indiana University Mathematics Journal}, vol.~42, no.~3, pp. 969--984,
  1993.

\bibitem{Lasserre:2009}
J.-B. Lasserre, \emph{Moments, Positive Polynomials and Their
  Applications}.\hskip 1em plus 0.5em minus 0.4em\relax World Scientific, 2009,
  vol.~1.

\bibitem{Conti:1956}
R.~Conti, ``Sulla prolungabilit{\`a} delle soluzioni di un sistema di equazioni
  differenziali ordinarie.'' \emph{Bollettino dell'Unione Matematica Italiana},
  vol.~11, no.~4, pp. 510--514, 1956.

\bibitem{Brauer:1961}
F.~Brauer, ``Global behavior of solutions of ordinary differential equations,''
  \emph{Journal of Mathematical Analysis and Applications}, vol.~2, no.~1, pp.
  145--158, 1961.

\bibitem{Bellman:1962}
R.~Bellman, ``Vector {Lyapunov} functions,'' \emph{SIAM Journal of Control},
  vol.~1, no.~1, pp. 32--34, 1962.

\bibitem{Beckenbach:1961}
E.~F. Beckenbach and R.~Bellman, ``Inequalities,'' \emph{Spring-Verlag, New
  York/Berlin}, 1961.

\bibitem{Von:2007}
U.~Von~Luxburg, ``A tutorial on spectral clustering,'' \emph{Statistics and
  computing}, vol.~17, no.~4, pp. 395--416, 2007.

\bibitem{Ng:2002}
A.~Y. Ng, M.~I. Jordan, and Y.~Weiss, ``On spectral clustering: Analysis and an
  algorithm,'' in \emph{Advances in neural information processing systems},
  2002, pp. 849--856.

\bibitem{Bell:1965}
H.~E. Bell, ``Gershgorin's theorem and the zeros of polynomials,''
  \emph{American Mathematical Monthly}, pp. 292--295, 1965.

\bibitem{Ahmadi:2017}
A.~A. Ahmadi, G.~Hall, A.~Papachristodoulou, J.~Saunderson, and Y.~Zheng,
  ``Improving efficiency and scalability of sum of squares optimization: Recent
  advances and limitations,'' \emph{arXiv preprint arXiv:1710.01358}, 2017.

\end{thebibliography}
%

\end{document}